\documentclass[reqno,11pt]{amsart}
\usepackage{amsmath,amssymb,amsfonts}

\usepackage{tikz}
\usetikzlibrary{matrix,arrows}
\usepackage{verbatim}

\usepackage[pagebackref,colorlinks,citecolor={red!50!black},linkcolor={blue!80!black}]{hyperref}

\renewcommand*{\backref}[1]{}
\renewcommand*{\backrefalt}[4]{%
	\scriptsize%
	{
		\ifcase #1 (\textcolor{red}{Uncited.})%
		\or (Cited\ on p.~#2)%
		\else (Cited\ on pp.~#2)%
		\fi%
	}
}

\usepackage{orcidlink}

\usepackage{fourier}
\usepackage{mathtools}
\usepackage[utf8]{inputenc}
\usepackage[varqu,varl]{inconsolata}

\title{Homotopy limits of complexes}

\author[L. Alonso]{Leovigildo Alonso Tarr\'{\i}o \orcidlink{0000-0002-6896-0652}}
\address[L. A. T.]{CITMAGA\\
	Departamento de Matem\'a\-ticas\\
	Universidade de Santiago de Compostela\\
	E-15782  Santiago de Compostela, Spain}
\email{leo.alonso@usc.es}

\author[R. Alvite]{Ra\'ul Alvite Paz\'o \orcidlink{0009-0007-1753-7057}}
\address[R. A. P.]{CITMAGA\\
	Departamento de Matem\'a\-ticas\\
	Universidade de Santiago de Compostela\\
	E-15782  Santiago de Compostela, Spain}
\email{raul.alvite.pazo@usc.es}

\author[A. Jerem\'{\i}as]{Ana Jerem\'{\i}as L\'opez \orcidlink{0000-0001-7964-1334}}
\address[A. J. L.]{CITMAGA\\
	Departamento de Matem\'a\-ticas\\
	Universidade de Santiago de Compostela\\
	E-15782  Santiago de Compostela, Spain}
\email{ana.jeremias@usc.es}

\subjclass[2010]{18G80 (primary); 18A30, 18G35, 16E35 (secondary)}

\date{\emph{Unreleased}; \emph{typeset}: \today}

\theoremstyle{plain}
\newtheorem{thm}{Theorem}[section]
\newtheorem{lem}[thm]{Lemma}
\newtheorem{cor}[thm]{Corollary}
\newtheorem{prop}[thm]{Proposition}

	\makeatletter
	\newtheorem*{rep@theorem}{\rep@title}
	\newcommand{\newreptheorem}[2]{%
		\newenvironment{rep#1}[1]{%
			\def\rep@title{#2~\ref{##1}}%
			\begin{rep@theorem}}%
			{\end{rep@theorem}}}
	\makeatother
	
	\makeatletter
	\def\namedlabel#1#2{\begingroup
		\def\@currentlabel{#2}%
		\label{#1}\endgroup}
	\makeatother

\newreptheorem{theorem}{Theorem}
\newreptheorem{cor}{Corollary}

\theoremstyle{remark}
\newtheorem*{rem}{Remark}

\theoremstyle{definition}

\newtheorem*{ack}{Acknowledgements}
\newtheorem{cosa}[thm]{}

\numberwithin{equation}{thm}

\newcommand{\CN}{\mathcal{N}}

\newcommand{\CP}{\mathcal{P}}

\newcommand{\SA}{\mathsf{A}}

\newcommand{\SC}{\mathsf{C}}

\newcommand{\SI}{\mathsf{I}}

\newcommand{\SR}{\mathsf{R}}

\newcommand{\ST}{\boldsymbol{\mathsf{T}}}

\newcommand{\CCC}{\boldsymbol{\mathsf{C}}}
\newcommand{\D}{\boldsymbol{\mathsf{D}}}
\newcommand{\K}{\boldsymbol{\mathsf{K}}}
\newcommand{\LL}{\boldsymbol{\mathsf{L}}}
\newcommand{\R}{\boldsymbol{\mathsf{R}}}

\newcommand{\NN}{\mathbb{N}}
\newcommand{\ZZ}{\mathbb{Z}}

\newcommand{\holim}[1]{\begin{array}[t]{c} {\rm holim}\\[-7.5 pt]
		{\longleftarrow} \end{array}}
\newcommand{\hocolim}[1]{\begin{array}[t]{c} {\rm holim}\\[-7.5 pt]
		{\longrightarrow} \end{array}}

\newcommand{\lto}{\longrightarrow}

\newcommand{\epi}{\twoheadrightarrow}

\newcommand{\iso}{\mathrel{\tilde{\to}}}

\DeclareMathOperator{\Hom}{Hom}

\DeclareMathOperator{\rhom}{\R\!\Hom}

\DeclareMathOperator{\Tot}{Tot}
\DeclareMathOperator{\Img}{Im}
\DeclareMathOperator{\cok}{Coker}

\DeclareMathOperator{\h}{H}
\DeclareMathOperator{\id}{id}

\newcommand{\md}{\text{-}\mathsf{Mod}}

\newcommand{\op}{\mathsf{op}}
\newcommand{\ab}{\mathsf{Ab}}
\newcommand{\set}{\mathsf{Set}}

\DeclareMathOperator{\cone}{Cone}

\DeclareMathOperator{\cat}{\mathsf{Cat}}

\newcommand{\ie}{{\it i.e.~}}

\DeclareMathOperator{\pfinmax}{\CP_{\mathsf{f}}^{\mathsf{m}}}

\begin{document}
	
\begin{abstract} 
We propose a notion of homotopy limit in the category of complexes over an abelian category with products by totalizing the classic construction of the Roos' complex that computes derived inverse limits. For complexes of modules over a non-necessarily commutative ring, we show that our construction of homotopy limits computes the derived limit complex, and under an acyclicity hypothesis on the inverse system, we prove that it is quasi-isomorphic to the limit. We further show that, in general, the construction is appropriately dual of the previous construction of homotopy colimits of complexes from \cite{AJS}, and that there also is a dual behavior between derived limits and colimits in derived categories of modules. Finally, we show that colocalizing subcategories are stable for homotopy limits.
\end{abstract}
	
\maketitle
\tableofcontents

\section*{Introduction}
	
One of the distinctive features of derived categories is the absence of limits and colimits, due to the localization procedure involved in their definition. A remedy is to propose a functorial construction that recovers at least the homology of the purported limits or colimits of the diagrams involved. In the topological context, this strategy goes back to Bousfield and Kan \cite{BK}.

The classical paper \cite{BN} proposed a purely homological algebra construction of homotopy co/limits of countable sequential systems as functors on the category of complexes whose homologies compute the homologies of the true co/limits of the diagrams. In \cite{AJS} a purely algebraic construction of the homotopy colimits of a more general diagram of complexes (indexed by an ordered set) was proposed. In the context of Grothendieck categories, \ie, where we assume the existence of a generator and the exactness of filtered colimits (the known condition $\mathsf{AB5}$), their behavior is excellent. The main result that explains the adequacy of the construction is that they are quasi-isomorphic to the actual colimit of complexes \cite[Theorem 2.2]{AJS}. 

A further crucial property is that, given a system of objects that belong to a localizing subcategory, then its homotopy colimit remains in the localizing subcategory \cite[Theorem 3.1]{AJS}. We remind the reader that a localizing subcategory of a triangulated category is a triangulated subcategory stable for (small) coproducts.

It is clear that the case of limits (\textit{i.e.}, inverse limits) is more complicated due to their lack of exactness in the abelian categories of interest like Grothendiek categories. In this paper we propose a construction of homotopy limit by extending the classical construction of Roos \cite{roos-1958} to the context of complexes by totalization. That this construction is reasonable is witnessed by its good properties, analogous to those enjoyed by the previously mentioned construction of homotopy colimits.

Let us discuss with some detail these properties. The main results exhibit the compatibility of our notion with usual limits for the derived category of modules over a general ring (associative and unital), under a mild cardinality hypothesis on the inverse system. The first one needs conditions on the system that ensure acyclicity for limits. It says: 

\begin{reptheorem}{thm:quis-abelian-cat-ab4*-gen}
	Suppose $R\md$ is the category of left $R$-modules for a ring $R$, and let $\Lambda$ be a filtered poset of cardinality \,$\#\Gamma < \aleph_\omega$. If $F \colon \Lambda^\op \to \CCC(R\md)$ is an inverse system which is weakly flabby (degree-wise), then there exists a quasi-isomorphism 
	\[ \varprojlim F \lto \holim{} F.\]
\end{reptheorem}

Further, relaxing the acyclicity condition, this complex computes derived limits. The corresponding statement is the following:

\begin{reptheorem}{thm:in-rmod-holim-is-rlim}
In the previous hypotheses, let $F \colon \Lambda^\op \to \CCC(R\md)$ be \emph{any} inverse system. There exists a natural isomorphism in $\D(R\md)$ 
\[
	\holim{} F \cong \R\varprojlim F.
\]
\end{reptheorem}

An additional result, just under the minimal $\mathsf{AB3}$ condition (on existence of coproducts) on the abelian category under consideration, shows that homotopy limits and colimits are related in a dual way, the precise statement is Theorem \ref{hom-and-hocolim-holim}. This result also shows how our theory is consistent with the notion of homotopy colimit developed in \cite{AJS}.

In derived categories of modules, this last result combines with Theorem \ref{thm:in-rmod-holim-is-rlim} to establish an enriched dual relationship between derived colimits and derived limits. The statement is as follows:

\begin{repcor}{cor:derived-limits-and-colimits}
	Suppose $R\md$ is the category of left $R$-modules for a ring $R$, and let $\Lambda$ be a filtered poset of cardinality \,$\#\Lambda < \aleph_\omega$. For any direct system $G \colon \Lambda \to \CCC(R)$ and $X \in \CCC(R)$, we have the following isomorphism in $\D(R)$
	\[
	\rhom^\bullet(\LL\varinjlim G, X) \cong \R\varprojlim \rhom^\bullet(G(-),X),
	\]
	or, equivalently,
	\[
	\rhom^\bullet(\!\!\!\hocolim{}\!G, X) \cong \holim{} \!\!\!\rhom^\bullet(G(-),X).
	\]
\end{repcor}

All this shows that, at least for the derived category of a ring, our construction gives a reasonable  notion of homotopy limit. A recent paper by Arakawa \cite{Arak} provides a categorical construction of a homotopy limit in terms of the bar construction and justifies the adequacy of the construction by appealing to a certain model category structure on the category of complexes. Our construction is aligned with classical homological algebra methods. Its adequacy relies on the previously mentioned results relating our construction to classical derived limits.

The previous results use in an essential way the exactness of products, a very specific property of the category of modules among Grothendieck categories. The analogous results on more general categories would require subtler conditions on the abelian base category but we will not explore them here.

Under weaker hypotheses, we show in addition that our homotopy limits behave well vis-a-vis  colocalizing subcategories. Let us make this precise. A colocalizing subcategory of a triangulated category is a triangulated subcategory stable for products. Under the conditions $\mathsf{AB3}$, $\mathsf{AB4^*}$ and the existence of a generator for an abelian category $\SA$, we prove that given an inverse system all of whose objects lie in a colocalizing subcategory of $\D(\SA)$, its homotopy limit remains in the colocalizing subcategory. This is the content of Theorem \ref{thm:holim-colocalizing}. This result is in a sense dual to \cite[Theorem 3.1]{AJS}.

Let us now describe the organization of the paper. In section \ref{Section:Conventions} we establish the basic notations and sign conventions. We recall Roos' construction of the complex that computes derived categories following mainly \cite{Ntc}. We also discuss some acyclicity conditions for inverse systems. Some of them arise as variants of the classical Mittag-Leffler condition from \cite{ega3}. Next, we present conditions that arise in the interpretation of an inverse system as an associated sheaf on a topological space naturally attached to the ordered set defining the shape of the diagrams of which we are taking limits and compare to the previously discussed acyclicity conditions.

The next section \ref{Section:homotopy-limit} contains the main results on the characterization of homotopy limits. The basic tool is to decompose the inverse system in an inverse system of \emph{finite} inverse systems with a maximum for which the result is obvious and explicit. In section \ref{Section:duality} we explore the duality between homotopy limits and colimits, and extend it to derived limits and colimits using the results of the previous section. Finally, in section \ref{Section:homotopy-limits-and-colocalizing-subcategories} we establish the fact that colocalizing subcategories are stable under homotopy limits. For this result it is essential to decompose the homotopy colimit complex by a cofiltration that satisfies the Mittag-Leffler condition, a decomposition dual to a filtration that exists for homotopy colimits \cite[Lemma 3.2]{AJS}. Overall, the lack of exactness on limits requires a special care and some vanishing condition in order to achieve the dualization of some properties of homotopy colimits.

\begin{ack}
The authors are grateful to Eduardo Loureiro who made us interested in homotopy limits and gave us an initial push to develop the present theory. 
\end{ack}
	
\section{Notation and conventions}\label{Section:Conventions}
	Let $\SA$ be an abelian category. Denote by $\CCC(\SA)$ the category of (cochain) complexes of objects of $\SA$ and by $\CCC\CCC(\SA)$ the category of double complexes of objects of $\SA$. The convention for this last category is that the objects of $\CCC\CCC(\SA)$ are complexes of complexes. An object of $\CCC\CCC(\SA)$ is a family $\left\{X, d_h, d_v\right\}$ where $X$ is a bigraded object, for $r,s \in \ZZ$ $d_h^{rs} \colon X^{rs} \to X^{r+1 s}$ corresponds to the ``horizontal'' differential, $d_v^{rs} \colon X^{rs} \to X^{r s+1}$ corresponds to the ``vertical'' differential and the two differentials commute, that is, $d_h \circ d_h = 0$, $d_v \circ d_v = 0$ and $d_v \circ d_h = d_h \circ d_v$.
	
	Throughout the text, by a ring $R$ we will mean a possibly non-commutative ring (\textit{i.e.}, associative and unital). We will always consider left $R$-modules and denote the corresponding category by $R\md$. In the event that $\SA = R\md$, we will write $\CCC(R) \coloneqq \CCC(R\md)$, as customary. Similarly, we will also denote the homotopy category by $\K(R) \coloneqq \K(R\md)$ and the derived category by $\D(R) \coloneqq \D(R\md)$.
	
\begin{cosa}\label{totalization}
		\textbf{Totalization.}
		There are two main ways in which an object in $\CCC(\SA)$ can be obtained from a double complex, both of them are often called the total complex of a double complex. Both constructions agree when the double complex is bounded in one of its degrees. In the general case, they differ. We employ the term totalization of a complex instead.
		
		The most usual one is the \textit{totalization by coproducts}, which for a double complex $\{X, d_h, d_v\}$ is defined, in each degree $n \in \ZZ$, by 
		\[ \left(\Tot^{\coprod}(X)\right)^n \coloneqq \coprod_{r+s=n} X^{rs}, \]
		and whose differential is given by 
		\begin{equation}\label{differential-totalization}
			d^n \coloneqq \sum_{r+s = n} d_h^{rs} + (-1)^r d_v^{rs}.
		\end{equation}
		The \textit{totalization by products} of $\{A, d_h, d_v\}$ is definted similarly, that is, 
		\[
			\left(\Tot^{\prod}(X)\right)^n \coloneqq \prod_{p+q=n} X^{pq}
		\] for each $n \in \ZZ$ and its differential is defined following the rule of the totalization by coproducts.
		
		Importantly, in both cases, while the sign involved in the differential comes from the horizontal degree, it acts on the vertical part of the differential. For future reference, we will explicitly describe the differential of the totalization by products after composing with a projection, as we will make use of the universal property of the product repeatedly in Section \ref{Section:homotopy-limit}. Suppose we have totalized by products a double complex $(X, d_h, d_v)$ and thus obtained a complex $(\Tot^{\prod}(X), d)$. Then, in degree $n$ the composite 
		\[
			\prod_{k+l=n} X^{kl} \xrightarrow{d^n} \prod_{r+s=n+1} X^{rs} \xrightarrow{p_{rs}} X^{rs},
		\]
		where $p_{rs}$ is the projection, can be described as
		\begin{equation}\label{differential-totalization-products}
			p_{rs} d^n = d_h^{r-1 s} p_{r-1 s} + (-1)^r d_v^{r s-1} p_{r s-1}.
		\end{equation}
		That is, the expression in Equation \ref{differential-totalization} describes the morphisms going out of the bigraded pieces making up the totalized complex in each degree, while the one above describes the morphisms going in.
		
		It is straightforward to check that totalization (by coproducts or products) is functorial.
	\end{cosa}

	\begin{cosa}\label{convention-internal-hom}
		\textbf{Sign convention for the internal $\Hom$ of $\CCC(\SA)$.}
		There exist several sign conventions regarding the differential of the complex obtained from applying the bifunctor $\Hom^\bullet(-,-)$ to complexes $M,N \in \CCC(\SA)$. The convention we will use is that, for $n \in \ZZ$, the differential
		\[
			d^n \colon \Hom^n(M,N)  \lto  \Hom^{n+1}(M,N),
		\]
		where $\Hom^n(M,N) = \prod_{j \in \ZZ} \Hom(M^j,N^{j+n})$, is defined as follows: if we let $f = (f^j)_{j \in \ZZ} \in \prod_{j \in \ZZ} \Hom(M^j,N^{j+n})$, then the composition of $d^n$ with the projection $p_j$ on the $j$-th factor is
		\[
			p_j \circ d^n(f) = f^{j+1} \circ d_M^j+ (-1)^{n+1} d_N^{j+n} \circ f^j.
		\]
		With this definition, one has that 
		\[
			\ker(\Hom^0(M,N) \overset{d^0}{\lto} \Hom^1(M,N)) = \Hom_{\CCC(\SA)}(M,N)	
		\]
		and 
		\[
			\Img(\Hom^{-1}(M,N) \overset{d^{-1}}{\lto} \Hom^0(M,N))
		\] 
		are just the nullhomotopic cochain complex morphisms, so 
		\[
			\h^0 \Hom^\bullet(M,N) = \Hom_{\K(\SA)}(M,N).
		\]
		Observe that we have chosen the sign to appear in the covariant part, although this is not always the case (\textit{cf.} \cite[p. 23]{yellow}). Then, our choice is closer to \cite[p. 62]{W} or \cite[p. 168]{HS}. The similarity comes, of course, after totalization, because in our double complexes differentials commute. This convention is helpful for two reasons. For one, it allows us to see $\Hom^\bullet(M,N)$ as the totalization by products of a suitable double complex; namely, the double complex $\Hom(M,N)^{\bullet\bullet}$ which in degree $(r,s)$ is equal to $\Hom(A^{-r},B^s)$ and whose differentials are
		\begin{equation*}
			\begin{aligned}
				&d_h^{rs} \coloneqq \Hom(d_M^{-r-1},N^s) \colon \Hom(M^{-r},N^s) \lto \Hom(M^{-r-1}, N^s), \text{ and} \\
				&d_v^{rs} \coloneqq (-1)^{s+1}\Hom(M^{-r},d_N^s) \colon \Hom(M^{-r},N^s) \lto \Hom(M^{-r}, N^{s+1}).
			\end{aligned}
		\end{equation*}
		With the differentials defined in this way, it becomes clear that $\Hom(M,N)^{\bullet\bullet}$ is a double complex, for the two differentials commute. Moreover, it follows from the definitions that
		\[ \Hom^\bullet(M,N) = \Tot^{\prod}(\Hom(M,N)^{\bullet\bullet}). \]
		The second reason is that we needed to fix a sign convention in order to prove Proposition \ref{hom-and-hocolim-holim} in a rigorous way. As a result, it will follow that homotopy limits and colimits are dual, as one would expect.
	\end{cosa}
	
	\begin{cosa}\label{higher-derived-limits}
		\textbf{Higher derived limits in $\mathsf{AB4^*}$ abelian categories.}
		Let $\SI$ be a small category and assume the abelian category $\SA$ satisfies $\mathsf{AB4^*}$---products are exact. By an inverse system in $\SA$ of shape $\SI$ we mean a functor $F \colon \SI^\op \to \SA$, that is, an object of $\cat(\SI^\op, \SA)$, the category of all contravariant functors from $\SI$ to $\SA$. Our goal is to compute the right derived functors of the left-exact functor
		\[ 
			\varprojlim \colon \cat(\SI^\op, \SA) \lto \SA,
		\]
		which makes sense because $\SA$ satisfies $\mathsf{AB3^*}$ and thus has all limits. Denote those right derived functors, to which we will refer as higher derived limits, as follows:
		\[ 
			\sideset{}{^{n}}\varprojlim \coloneqq \SR^n \varprojlim.
		\]
		As usual, these can be computed via injective resolutions provided the category $\cat(\SI^\op, \SA)$ has enough injectives. However, we will not assume that is the case. When $\SA$ satisfies $\mathsf{AB4^*}$, one can compute the higher derived limits as well via the construction of an explicit complex for each diagram $F \in \cat(\SI^\op, \SA)$, concentrated in non-negative degrees, whose $n$-th cohomology precisely yields $\sideset{}{^{n}}\varprojlim F$. The original idea goes back to Roos \cite{roos-1958}, whereas a more recent and general treatment can be found in \cite[A.3.]{Ntc}. 
		
		In this section we briefly recall the construction. For any inverse system $F$ of shape $\SI$, let us consider the nerve of the category $\SI$, $\CN_\bullet(\SI)$. That is, the simplicial set whose $k$-simplices, $\CN_k(\SI)$, are sequences of $k$ composable morphisms of $\SI$. An element of $\CN_k(\SI)$ is a chain of morphisms in $\SI$
		\[
			\textbf{i} \equiv i_0 \lto i_1 \lto \cdots \lto i_k,
		\]
		which we will denote by $\textbf{i}$ for short, and we will refer to the objects in the chain $\textbf{i} \in \CN_k(\SI)$ by $i_j$, for each $j \in \{0,\dots,k\}$. Additionally, given a chain $\textbf{i} \in \CN_{k+1}(\SI)$, we will denote by $\widehat{\textbf{i}}_j$ the chain in $\CN_{k}(\SI)$
		\[
			\widehat{\textbf{i}}_j \equiv i_0 \to \dots \to \widehat{i_j} \to \dots \to i_{k+1} 
		\] obtained after removing the object $i_j$.
		
		Using Neeman's notation, define the complex $N^\bullet(F)$ associated to the inverse system $F$ by letting the object in degree $k$ to be 
		\[
			N^k(F) \coloneqq \prod_{ \textbf{i} \in \CN_k(\SI)} F(i_0),
		\] where $N^k(F) = 0$ for all $k < 0$ and the differential $d^k \colon N^k(F) \to N^{k+1}(F)$ is determined by the following expression in each component:
		\begin{equation}\label{eq: differential-N-complex}
			p_{\textbf{i}}d^k = F(i_0 \to i_1)p_{\, \widehat{\textbf{i}}_0} + \sum_{j=1}^{k+1} (-1)^j \id_{F(i_0)}p_{\, \widehat{\textbf{i}}_j}.
		\end{equation}
		This definition is somewhat compressed, so let us spell it out in greater detail to make it clearer. The idea is that the morphisms involved in the definition are the obvious ones that are induced by deleting an object in the chain of morphisms $\textbf{i} \equiv i_0 \to i_1 \to \dots \to i_{k+1} \in \CN_{k+1}(\SI)$. Deleting the object $i_0$ gives the commutative diagram 
		\[
			\begin{tikzpicture}
				\matrix (m) [matrix of math nodes,
				row sep=3em, column sep=5em,
				text height=2ex,
				text depth=0.25ex]{
					\prod_{ \textbf{i} \in \CN_k(\SI)} F(i_0) &  \prod_{ \textbf{i} \in \CN_{k+1}(\SI)} F(i_0)  \\
					F(i_1) & F(i_0),  \\
				};
				\path[->, font=\scriptsize]
				(m-1-1) edge node[above] {$d^k$} (m-1-2)
				edge node[left] {$p_{\,\widehat{\textbf{i}}_0}$} (m-2-1)
				(m-1-2) edge node[right] {$p_{\,\textbf{i}}$} (m-2-2)
				(m-2-1) edge node[above] {$F(i_0 \to i_1)$} (m-2-2);
			\end{tikzpicture}
		\] whereas deleting any object $i_j$, with $1 \leq j \leq k+1$, corresponds to the commutative diagram
		\[
			\begin{tikzpicture}
				\matrix (m) [matrix of math nodes,
				row sep=3em, column sep=5em,
				text height=2ex,
				text depth=0.25ex]{
					\prod_{ \textbf{i} \in \CN_k(\SI)} F(i_0) &  \prod_{ \textbf{i} \in \CN_{k+1}(\SI)} F(i_0)  \\
					F(i_0) & F(i_0).  \\
				};
				\path[->, font=\scriptsize]
				(m-1-1) edge node[above] {$d^k$} (m-1-2)
				edge node[left] {$p_{\widehat{\textbf{i}}_j}$} (m-2-1)
				(m-1-2) edge node[right] {$p_{\textbf{i}}$} (m-2-2)
				(m-2-1) edge node[above] {$\id_{F(i_0)}$} (m-2-2);
			\end{tikzpicture}
		\]
		Note that for the construction of $N^\bullet(F)$ we used $\mathsf{AB3^*}$. The precise assertion from before is that, when $\SA$ moreover satisfies $\mathsf{AB4^*}$, the complex $N^\bullet(F)$ computes the higher derived limits of $F$, that is, $H^k(N^\bullet(F)) = \sideset{}{^{k}}\varprojlim F$ for all $k \geq 0$. This is shown in \cite[Lemma A.3.2.]{Ntc}, and the proof consists on showing that the collection of functors $\left\{T^k \coloneqq H^k(N^\bullet(-)), \, k \in \NN\right\}$, form a universal $\delta$-functor and that there is a natural isomorphism $T^0 \iso \varprojlim F$.
		
		\begin{rem}
			When dealing with the nerve of the small category $\SI$, we are considering any chain of composable morphisms---including degenerate chains, \emph{i.e.}, those containing identity morphisms. However, if we assume the small category $\SI$ is a partially ordered set, it is more convenient to do away with those and consider only non-degenerate chains of morphisms. 
			
			Denote by $\CN_\bullet'(\SI)$ the \emph{reduced nerve} of $\SI$, that is, the simplicial set whose $k$-simplices are the composable non-degenerate chains of morphisms of $\SI$. Analogously, for any $F \in \cat(\SI^\op, \SA)$, we will call $N'^\bullet(F)$ the complex defined in the same way as $N^\bullet(F)$ but replacing $\CN_\bullet(\SI)$ by the reduced nerve $\CN_\bullet'(\SI)$. Note that it is indeed well defined (and thus a complex) when $\SI$ is a poset, for in that case deleting an object in a non-degenerate chain of composable morphisms yields another non-degenerate chain, but that might not be the case for a general small category. The fact that $\h^0(N'^\bullet(F)) \cong \varprojlim F$ is still obvious---after all, the degenerate $1$-simplices are just identity morphisms and do not add any information to the computation of $\h^0$. Further, the collection of functors $\left\{T'^k, k \in \NN \right\}$, where $T'^k \coloneqq \h^k(N'^\bullet(-))$, constitutes an universal $\delta$-functor. This was already shown in \cite[Proposition 3]{roos-1958}, at least when $\SA$ is a concrete category (that is, when $\SA$ is equipped with a faithful functor $\SA \to \set$) and hence it makes sense to consider surjective morphisms. The idea is that the long exact sequence comes again from the fact that $\SA$ satisfies $\mathsf{AB4^*}$, and so it is enough to show that it is effaceable. This comes from two facts: one is the standard observation that an inverse system admits a monomorphism into a \textit{flabby} inverse system (see \ref{flabby-systems}); the second is that for any flabby inverse system $F \in \cat(\SI^\op, \SA)$ the cohomology $\h^k(N'^\bullet(F))$ vanishes for $k \geq 1$ (see \cite[Proposition 1]{roos-1958}, where the subsets of $\SI$ should be taken to be open in the Alexandrov topology described below). 
		\end{rem} 

	\end{cosa}
	
	\begin{cosa}\label{flabby-systems}
		\textbf{Inverse systems as sheaves, flabby systems and the Mittag-Leffler condition.}
		Given a poset $\SI$, we endow it with a topology, the Alexandrov topology, by letting the open sets be the lower sets. Explicitly, $|\SI|$ will be the topological space with the same underlying set as $\SI$, for which we declare a subset $U$ of $|\SI|$ to be open if and only if it is downward closed:
		\[ 
			\{j \in U \text{ and } i \leq j\} \implies i \in U. 
		\]
		Thus, the basic open subsets are the sets $|\SI|_{\leq j} \coloneqq \{i \in |\SI| \mid i \leq j\}$, with $j \in |\SI|$.
		
		Let $F \colon \SI^\op \to \SA$ be an inverse system and $\SA$ a complete abelian category (i.e. it satisfies $\mathsf{AB3^*}$). From $F$ we can construct a presheaf on the topological space $|\SI|$ that we write as $\tilde{F}$ by letting $\Gamma(U, \tilde{F}) \coloneqq \varprojlim F\vert_U$ for each open subset $U \subset |\SI|$, where $F\vert_U$ is the restriction of $F$ to the subcategory $U^\op$. With this definition, $\tilde{F}$ is automatically a sheaf on $|\SI|$, and its stalks at any $j \in |\SI|$ are just $\tilde{F}_{j} = F(j)$ (which also coincides with the sections along the basic open subset $|\SI|_{\leq j}$). Going in the other direction is straightforward: if we are given a sheaf $G$ on the topological space $|\SI|$, we can just define an inverse system $F \colon \SI^\op \to \SA$ by declaring $F(j)= G_{j}$, that is, the stalk of $G$ at $j$, which is exactly $\Gamma(|\SI|_{\leq j},G)$. This construction obviously establishes an (exact) isomorphism of categories between $\cat(\SI^\op, \SA)$ and the category of sheaves on the topological space $|\SI|$ (endowed with the Alexandrov topology) with values in $\SA$. As a consequence, when $\SA$ satisfies $\mathsf{AB3^*}$ the higher derived limits of $\varprojlim \colon \cat(\SI^\op, \SA) \to \SA$ are identified with the sheaf cohomology functors. 
		
		With this indentification in mind and when the category $\SA$ is concrete, it makes sense to consider the notion of a \textit{flabby} inverse system $F \colon \SI^\op \to \SA$, which is just asking the corresponding sheaf on $|\SI|$ to be flabby. That is, for any open subset $U$ of $|\SI|$, we ask the canonical map $\varprojlim F \to \varprojlim F\vert_U$, which corresponds to the restriction map $\Gamma(|\SI|, \tilde{F}) \to \Gamma(U, \tilde{F})$, to be surjective. 
	
		From the above discussion, it immediately follows that the higher derived limits vanish for flabby systems, since flabby sheaves are acyclic in sheaf cohomology.
		
		There exists a more general notion, that of a \textit{weakly flabby} inverse system \cite[p. 6]{jensen}, for which the conclusions are the same. We say an inverse system $F \colon \SI^\op \to \SA$ is \textit{weakly flabby} if for any \textit{filtered} open subset $U$ of $|\SI|$ the canonical map $\varprojlim F \to \varprojlim F\vert_U$ is surjective. For our purposes, it suffices to assume $\SA = R\md$ for a general ring $R$ for this notion (see Lemma \ref{lem: flabby-over-subsets-with-maximum} and Theorem \ref{thm:quis-abelian-cat-ab4*-gen}). 
		
		\begin{thm}[{\cite[Théorème 1.8.]{jensen}}]\label{thm: jensen-weakly-flabby}
			Let $\SA = R\md$ for a ring $R$ and $\SI$ a filtered poset. If an inverse system $F \colon \SI^\op \to \SA$ is weakly flabby, $\sideset{}{^{n}}\varprojlim F = 0$ for all $n \geq 1$.
		\end{thm}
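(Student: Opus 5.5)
We identify inverse systems $F\colon \SI^\op\to R\md$ with sheaves $\tilde F$ on the Alexandrov space $|\SI|$ (\ref{flabby-systems}), so that $\sideset{}{^{n}}\varprojlim F=\h^n(|\SI|,\tilde F)$. The aim is to show that weakly flabby sheaves on $|\SI|$ are acyclic when $\SI$ is filtered. The natural approach is the standard flabby-sheaf argument: \emph{weak flabbiness} must be stable under the relevant operations, and then acyclicity follows by dimension shifting.

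\textbf{Step 1.} Embed $F$ into a flabby system. Take $G(j)=\prod_{i\le j}F(i)$ with the obvious projections as transition maps; this is the Godement-type sheaf $\prod_i (\iota_i)_*F(i)$, which is flabby. Let $C=G/F$ be the cokernel computed degreewise. Since $\sideset{}{^{n}}\varprojlim G=0$ for $n\ge1$, the long exact sequence gives an isomorphism $\sideset{}{^{n+1}}\varprojlim F\cong\sideset{}{^{n}}\varprojlim C$ for $n\ge1$, and an exact sequence $\varprojlim G\to\varprojlim C\to\sideset{}{^{1}}\varprojlim F\to0$.

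\textbf{Step 2.} Show that $\sideset{}{^{1}}\varprojlim F=0$ for weakly flabby $F$, that is, that $\varprojlim G\to\varprojlim C$ is surjective. Given a compatible family $c\in\varprojlim C$, run a Zorn's lemma argument over pairs $(U,g)$, where $U$ is a filtered open subset and $g\in\varprojlim G|_U$ lifts $c|_U$. The key points are as follows.
\begin{itemize}
\item Lifts exist locally on each basic open $|\SI|_{\le j}$, since that set has a maximum.
\item Two lifts on $U\subseteq V$ differ by an element of $\varprojlim F|_U$. Weak flabbiness extends that element to all of $|\SI|$, and hence to $V$, so lifts can be adjusted to agree.
\item An increasing union of filtered opens is filtered, and the restriction maps are compatible along the chain. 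Hence a chain of partial lifts has an upper bound, provided each step corrects the lift only by an element already extending globally.
\end{itemize}
A maximal $U$ must equal $|\SI|$: otherwise take $j\notin U$, choose $k\ge j$ and above elements of $U$ using filteredness, and replace $U$ by $U\cup|\SI|_{\le k}$. This set is filtered because $k$ dominates everything in it that matters. The enlarged lift is obtained by correcting the local lift on $|\SI|_{\le k}$ using a global extension of the discrepancy on $U\cap|\SI|_{\le k}$. We must check that $U\cap|\SI|_{\le k}$ is filtered; if it is not, we use instead a directed-union/transfinite construction indexed along a cofinal well-ordered chain.

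\textbf{Step 3.} Show that $C$ is again weakly flabby. For a filtered open $U$, an element of $\varprojlim C|_U$ lifts to $\varprojlim G|_U$ by Step 2 applied on $U$, since $F|_U$ is weakly flabby over the filtered poset $U$. It then extends to $\varprojlim G$ because $G$ is flabby, and its image in $\varprojlim C$ is the required extension. Induction on $n$ via $\sideset{}{^{n+1}}\varprojlim F\cong\sideset{}{^{n}}\varprojlim C$ then finishes the proof.

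The main obstacle is Step 2. Filteredness of the intermediate opens in the transfinite gluing is delicate, and uncountable filtered posets need not admit cofinal chains. This is precisely where Jensen's argument does its work, likely by reducing to filtered opens generated by directed subsets and gluing over an ordinal filtration of $\SI$ by filtered sub-opens.
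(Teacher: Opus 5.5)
Your Steps 1 and 3 are sound. (The paper gives no proof of this result; it only cites Jensen.) The system $G(j)=\prod_{i\le j}F(i)$ is flabby and contains $F$. Once $\sideset{}{^{1}}\varprojlim$ is known to vanish for \emph{every} weakly flabby system on \emph{every} filtered poset, $C=G/F$ is weakly flabby and dimension shifting finishes the proof.

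The gap is Step 2. It is the entire content of the theorem, and your Zorn argument does not establish it, because the maximality step fails. Let $U$ be a filtered open and $k\notin U$. Then $U\cup|\SI|_{\le k}$ is filtered only if $U\subseteq|\SI|_{\le k}$. Indeed, a common upper bound of $k$ and some $u\not\le k$ cannot lie in $|\SI|_{\le k}$, and if it lay in $U$ then $k\in U$. Filteredness only bounds finitely many elements of $U$ at a time, so your enlargement works only when the maximal $U$ is bounded above. That is the trivial case.

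If $U$ is unbounded, any filtered open $V\supsetneq U$ containing $j$ must contain upper bounds of $j$ with every $u\in U$, and none of these lie in $U$. Extending a lift from a filtered open $U$ to a larger filtered open $V$ is in general no easier than the theorem itself: for $U$ basic and $V=|\SI|$ it \emph{is} the theorem. So the argument is circular.

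Independently, your correction step needs $U\cap|\SI|_{\le k}$ to be filtered. This fails in general, and weak flabbiness says nothing about non-filtered opens. Your fallback along a cofinal well-ordered chain only covers posets that have one. Note that $\pfinmax(\Lambda)$ with $\Lambda$ uncountable has none, since a chain of finite sets is countable. That is exactly the poset to which the paper applies this theorem.

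The missing idea is a transfinite induction on $\#\SI$, gluing along a well-ordered chain of smaller \emph{directed} pieces rather than along basic opens.

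\emph{Countable case.} Take a cofinal sequence $i_0\le i_1\le\cdots$. Each $|\SI|_{\le i_n}$ is a filtered open, so weak flabbiness makes $F(i_{n+1})\to F(i_n)$ surjective. Higher limits over a directed set are unchanged by passing to a cofinal subset, so the Mittag-Leffler property gives the vanishing.

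\emph{Uncountable case.} For $\#\SI=\kappa$ uncountable, write $\SI=\bigcup_{\beta<\kappa}\SI_\beta$ as a continuous increasing union of directed subsets of cardinality $<\kappa$ (Iwamura's lemma).
\begin{enumerate}
\item Each $F|_{\SI_\beta}$ is again weakly flabby: a directed lower subset of $\SI_\beta$ is cofinal in its lower closure in $\SI$, which is a filtered open. Hence $F|_{\SI_\beta}$ is acyclic by induction.
\item The system $\beta\mapsto\varprojlim_{\SI_\beta}F$ has surjective transitions by weak flabbiness and is continuous at limit ordinals. So it is strongly Mittag-Leffler, hence acyclic by Lemma~\ref{lem: sequences-flabby-neeman}.
\item Consider the spectral sequence of $\varprojlim_{\SI}=\varprojlim_{\beta}\circ\varprojlim_{\SI_\beta}$. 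It exists because the inner functor sends injective, hence flabby, systems to strongly Mittag-Leffler ones. It has $E_2^{p,q}=0$ whenever $p+q\ge 1$, which yields $\sideset{}{^{n}}\varprojlim F=0$ for all $n\ge 1$.
\end{enumerate}
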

		
		An even weaker notion than these is the Mittag-Leffler condition. Although it can be defined more generally, our interest lies in the following version: the inverse system $F \colon \SI^\op \to \SA$ is said to satisfy the \textit{epimorphic Mittag-Leffler condition} if for any $i, j \in \SI$ with $i \leq j$ the morphism $F(j) \to F(i)$ is an epimorphism. With the above terminology, it is the same as saying that the morphisms between stalks (equivalently, between basic open subsets) in the associated sheaf $\tilde{F}$ are epimorphisms. 
		
		There is a strengthening of the epimorphic Mittag-Leffler condition in the case of sequences. That is, suppose $\SI = \SI(\gamma)$ is the set of ordinals smaller than $\gamma$, with $\gamma$ an infinite ordinal. Just this once, we will denote with letters $\alpha, \beta$ the elements of $\SI$. An inverse system $F \colon \SI^\op \to \SA$ will be called just a sequence in $\SA$ of length $\gamma$. Note that the usual case, where $\SI = \NN$, is just letting $\SI = \SI(\omega)$. We declare a sequence $F \colon \SI^\op \to \SA$ to be \emph{strong epimorphic Mittag-Leffler} (see \cite[Definition A.3.10.]{Ntc}, where this notion is just called Mittag-Leffler sequence) if the following two conditions are met:
		\begin{enumerate}
			\item[(\romannumeral 1)] For any pair of ordinals $\alpha \leq \beta$ in $\SI$, the map $F(\beta) \to F(\alpha)$ is an epimorphism.
			\item[(\romannumeral 2)] For any limit ordinal $\beta \in \SI$, the map $F(\beta) \to \varprojlim_{\alpha < \beta} F(\alpha)$ is an epimorphism.
		\end{enumerate}
		
		Condition (\romannumeral 1) says the system is epimorphic Mittag-Leffler; condition (\romannumeral 2) tells us that in the associated sheaf $\tilde{F}$, the restriction morphism 
		\[
			\Gamma(|\SI|_{\leq \beta}, \tilde{F}) \lto \Gamma(\bigcup_{\alpha < \beta} |\SI|_{\leq \alpha}, \tilde{F})
		\]
		is also an epimorphism for any limit ordinal $\beta$. Obviously, if $\SI = \NN$, this definition is just the usual notion of epimorphic Mittag-Leffler sequence.
		
		Note that when $\SI = \SI(\gamma)$, the only non-basic open subsets are determined by the limit ordinals---given a limit ordinal $\beta \in \SI$, the subset $V$ of elements strictly smaller than $\beta$ is open, because $V = \bigcup_{\alpha < \beta} |\SI|_{\leq \alpha}$, but it is not basic since $\beta$ has no predecessor. Thus, asking for a sequence to be strong epimorphic Mittag-Leffler amounts to asking some extra restriction maps to be epi, so this notion is close to that of flabbiness. When the poset is a sequence in the usual sense, \emph{i.e.}, $\SI = \NN$, every open subset in the Alexandrov topology is clearly basic; therefore, it follows that when the category $\SA$ is concrete and such that epimorphisms are the surjective maps, the epimorphic Mittag-Leffler condition and flabbiness are equivalent. 
		
		However, it should be remarked that, generally speaking (and when it makes sense to compare them), the (strong) epimorphic Mittag-Leffler condition is weaker than flabbiness, for there might exist open subsets in the Alexandrov topology for which the restriction maps are not surjective. Therefore, the vanishing of higher derived limits for general (strong) epimorphic Mittag-Leffler systems does not come for free and it is often difficult to obtain.	
	\end{cosa}
	
	\begin{cosa}\label{countable-higher-derived-limits-abelian-cat}
		\textbf{Vanishing of higher derived limits of sequences.}
		In Sections \ref{Section:homotopy-limit} and \ref{Section:homotopy-limits-and-colocalizing-subcategories} we will be interested in (strong) epimorphic Mittag-Leffler sequences and the vanishing of their higher derived limits---specifically, in the vanishing of $\sideset{}{^{1}}\varprojlim$. This fact will be used in the proof of Lemma \ref{lem:colocalzing-mittag-leffler-countable-system} (and in the Remark before it).
		
		In the case of countable sequences, under certain conditions, it is clear why we only need $\sideset{}{^{1}}\varprojlim$ to vanish. Indeed, if $\SA$ satisfies $\mathsf{AB4^*}$, then the higher derived limits of a functor 
		\[
			F \colon \NN^\op \lto \SA
		\]
		can be computed as the cohomology of the so called Milnor two-term complex
		\[\prod_{i=0}^{\infty} F(i) \xrightarrow{1 - \text{shift}} \prod_{i=0}^{\infty} F(i),\]
		instead of the complexes $N^\bullet(F)$ or $N'^\bullet(F)$ of \ref{higher-derived-limits} (see \cite[Remark A.3.6.]{Ntc}). Note that the $\text{shift}$ map is defined as the composite
		\[
			\prod_{i=0}^{\infty} F(i) \xrightarrow{\pi} \prod_{i=1}^{\infty} F(i) \xrightarrow{\prod_{i=1}^{\infty} [F(i) \to F(i-1)]} \prod_{i=0}^{\infty} F(i),
		\]
		where $\pi$ is just the projection to the subproduct.
		Consequently, we have that $\varprojlim F = \ker(1 - \text{shift})$, $\sideset{}{^{1}}\varprojlim F = \cok(1- \text{shift})$ and $\sideset{}{^{n}}\varprojlim F = 0$ for $n \geq 2$. Then, one must only find conditions on $F$ so that $\sideset{}{^{1}}\varprojlim F$ vanishes. 
		
		\begin{rem}
			If the category $\SA$ is concrete and epimorphisms are surjective maps---like $\SA = R\md$ for a general ring $R$---we observed in \ref{flabby-systems} that a functor $F \colon \NN^\op \to \SA$ satisfying the epimorphic Mittag-Leffler condition is flabby, and thus $\sideset{}{^{1}}\varprojlim F$ also vanishes. The previous argument does not hold in general abelian categories, even those that satisfy $\mathsf{AB4^*}$ \cite[Proposition 3.12.]{Neeman2002}.
		\end{rem}
		
		Despite the last remark, the next result, stated for future reference and proven by Roos and Gabber, tells us that the epimorphic Mittag-Leffler condition is indeed sufficient as long as our abelian categories are nice enough.
		
		\begin{thm}[{\cite[Theorem 3.1.]{roos-2006}}]\label{thm:vanishing-countable-mittag-leffler-higher-limits}
		Let $\SA$ be an abelian category satisfying $\mathsf{AB3}$, $\mathsf{AB4^*}$ and \emph{having a generator}. Let $F \colon \NN^\op \to \SA$ be an inverse system satisfying the epimorphic Mittag-Leffler condition, that is, the maps $F(n+1) \to F(n)$, for any $n \in \NN$, are all epimorphisms. Then
		\[
			\sideset{}{^{i}}\varprojlim F = 0
		\]
		for $i \geq 1$.	
		\end{thm}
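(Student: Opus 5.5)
The plan is to reduce the statement to the surjectivity of a single map and then attack that map through a resolution built from the generator. First, by \ref{countable-higher-derived-limits-abelian-cat}, under $\mathsf{AB4^*}$ the higher derived limits of $F$ are computed by the Milnor two-term complex. So $\sideset{}{^{i}}\varprojlim F=0$ for $i\geq 2$ automatically, and it suffices to show that
\[
1-\text{shift}\colon \prod_{i\ge 0}F(i)\lto\prod_{i\ge 0}F(i)
\]
is an epimorphism. The strategy is to compare $F$ with an auxiliary tower for which this map visibly splits, and to transport the vanishing along a short exact sequence of towers. The long exact sequence of derived limits exists by $\mathsf{AB4^*}$, as in the Remark of \ref{higher-derived-limits}.

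The first step is the elementary observation that if all transition maps $Q(n+1)\to Q(n)$ of a tower $Q$ are \emph{split} epimorphisms, then $\sideset{}{^{1}}\varprojlim Q=0$ in any $\mathsf{AB3^*}$ category. Given sections $s_n$ with $s_n$ right inverse to $Q(n+1)\to Q(n)$, one writes down an explicit right inverse of $1-\text{shift}$ componentwise. Its $n$-th component is a finite sum of composites of the $s_k$ applied to the coordinates $y_0,\dots,y_{n-1}$, which is the categorical form of solving $x_n-f(x_{n+1})=y_n$ by $x_0=0$ and successive lifting.

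The second step uses the generator $G$ and $\mathsf{AB3}$ to build such a tower $Q$ together with an epimorphism of towers $Q\epi F$. The candidate is $Q(n)=G^{(S_n)}$ with $S_n\subset\prod_{k\le n}\Hom(G,F(k))$ the set of compatible strings $(g_0,\dots,g_n)$, that is, with $f\circ g_{k+1}=g_k$. The transition map is induced by truncation $S_{n+1}\to S_n$, and $Q(n)\to F(n)$ evaluates the last coordinate. Then $Q(n)\to F(n)$ is an epimorphism because every $g\colon G\to F(n)$ extends downward to a string, so all of $\Hom(G,F(n))$ is hit. Truncation, however, is surjective only if each $g_n$ lifts along $F(n+1)\to F(n)$, which need not hold because $G$ is not projective. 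I would therefore replace strings by strings ``after an epimorphic refinement'': at each stage, take the pullback of $F(n+1)\epi F(n)$ along $G\to F(n)$, cover that pullback by a coproduct of copies of $G$, and index by those copies. This makes the truncation maps surjective on index sets, so the $Q(n+1)\to Q(n)$ are split epis (inclusions of summands admit retractions in $\mathsf{AB3}$).

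Finally, let $K=\ker(Q\to F)$. The long exact sequence gives
\[
\sideset{}{^{1}}\varprojlim Q\to\sideset{}{^{1}}\varprojlim F\to\sideset{}{^{2}}\varprojlim K=0,
\]
so $\sideset{}{^{1}}\varprojlim F$ is a quotient of $0$ and vanishes. I expect the main obstacle to be the second step: making the refinement construction coherent, so that the index-set maps are genuinely surjective and the maps to $F(n)$ are compatible with the transitions while $Q(n)\to F(n)$ stays epimorphic. This is where Gabber's trick is needed, combining $\mathsf{AB3}$ (to form the coproduct covers) with $\mathsf{AB4^*}$ (products of epis are epis, which keeps the refinements under control across all $n$ at once). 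If this resists, my fallback would be to prove directly that a map $G\to\prod F(i)$ lifts through $1-\text{shift}$ after precomposing with an epimorphism. That approach would use the tower of pullbacks $P_n$ of partial solutions, and it would require showing $\varprojlim P_n\to G$ is epi, which again reduces to the split-tower situation above.
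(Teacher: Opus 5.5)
Your first step (towers with split epimorphic transitions have vanishing $\sideset{}{^{1}}\varprojlim$) and your last step (the long exact sequence, using $\sideset{}{^{2}}\varprojlim K=0$) are correct. However, the second step carries the entire content of the theorem, and the refinement you describe does not produce a split tower. Write $f_{n+1}\colon F(n+1)\to F(n)$ and let $g\colon G\to F(n)$ be a component of $Q(n)\to F(n)$. Cover $G\times_{F(n)}F(n+1)$ by $G^{(I)}$. Each copy $i\in I$ then gives maps $e_i\colon G\to G$ and $h_i\colon G\to F(n+1)$ with $f_{n+1}h_i=g\,e_i$. So $h_i$ lifts $g e_i$, not $g$. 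For $Q\to F$ to be a morphism of towers, the transition $Q(n+1)\to Q(n)$ must act on these copies by $e=(e_i)\colon G^{(I)}\epi G$. A section $\sigma$ of $e$ would give $f_{n+1}(h\sigma)=g$, that is, a lift of $g$ itself, which is exactly what fails when $G$ is not projective. If instead you define the transition by the surjection of index sets, the square with $F$ no longer commutes. Either way, you have only replaced $F$ by another tower with non-split epimorphic transitions.

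In fact, step two cannot be achieved by a formal reorganization, because it is as strong as the theorem. Suppose $Q\epi F$ is an epimorphism of towers, with components $\phi_m$, and $Q$ has transitions $q_{m+1}$ with sections $s_m$. Then $f_{m+1}\phi_{m+1}s_m=\phi_m$. Hence the maps $\phi_m s_{m-1}\cdots s_n\colon Q(n)\to F(m)$, for $m\geq n$, are compatible, and $\phi_n$ factors through $\varprojlim F\to F(n)$, which is therefore an epimorphism. Conversely, suppose $\varprojlim P\to P(0)$ is an epimorphism for every tower $P$ with epimorphic transitions. Take $P(0)=\prod_i F(i)$ and let $P(m)$ be the object of partial solutions $(y,x_1,\dots,x_m)$ of $x_k-f_{k+1}(x_{k+1})=y_k$ for $k<m$, with $x_0=0$. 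Its transitions are pullbacks of the $f_{m+1}$, so they are epimorphisms. The composite $\varprojlim P\to\prod_i F(i)\xrightarrow{1-\text{shift}}\prod_i F(i)$ is the epimorphism $\varprojlim P\to P(0)$, so $1-\text{shift}$ is an epimorphism and the theorem follows. Thus step two is equivalent in strength to the statement, and your fallback is circular for the same reason, as you yourself note.

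What is missing is an actual argument that the generator, together with $\mathsf{AB3}$ and $\mathsf{AB4^*}$, forces $\varprojlim F\to F(0)$ to be an epimorphism for such towers. The paper's remark citing Neeman shows that $\mathsf{AB4^*}$ alone does not suffice, so the generator must be used in an essential way; the phrase ``Gabber's trick'' in your text only names this step. The paper gives no proof of this statement: it quotes Theorem 3.1 of Roos, whose argument (due to Roos and Gabber) supplies exactly this step.
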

		
		From \ref{flabby-systems} we know that for more general sequences inm $\SA$ of the form $\SI = \SI(\gamma)$ for an ordinal $\gamma$, the strong epimorphic Mittag-Leffler condition is not far from flabbiness. Of course, the category $\SA$ must satisfy certain conditions for both notions to be comparable. It has also been observed that if $\gamma = \omega$ (\textit{i.e.}, $\gamma$ is the smallest infinite ordinal), both notions agree.
		
		One might wonder if for certain categories $\SA$ this also happens for any infinite ordinal $\gamma$. After all, for the associated sheaf $\tilde{F}$, the only restriction morphisms which need to be shown surjective are those where the bigger open subset is non-basic (for a much more detailed discussion, consult \cite[Remark A.3.12.]{Ntc}). Neeman showed that this holds when $\SA = \ab$, so it also holds true when $\SA = R\md$ for a general ring $R$, using an analogous argument:
		
		\begin{lem}[{\cite[Lemma A.3.13 and Corollary A.3.14]{Ntc}}]\label{lem: sequences-flabby-neeman}
			Let $\SA = R\md$ for a ring $R$, $\SI = \SI(\gamma)$ for an infinite ordinal $\gamma$, and let $F \colon \SI^\op \to \SA$ be an inverse system satisfying the strong epimorphic Mittag-Leffler condition. Then the associated sheaf $\tilde{F}$ on $|\SI|$ is flabby and, as a consequence, $\sideset{}{^{n}}\varprojlim F = 0$ for $n \geq 1$.
		\end{lem}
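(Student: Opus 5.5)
Since $|\SI|$ has the Alexandrov topology, flabbiness of $\tilde{F}$ means that for every open $U\subseteq|\SI|$ the restriction $\varprojlim F\to\varprojlim F\vert_U$ is surjective. In $\SI=\SI(\gamma)$ every proper open subset is a down-set of an ordinal, hence of the form $U_\beta=\{\alpha:\alpha<\beta\}$ for some $\beta\le\gamma$. So I will show that for every $\beta\le\gamma$ and every compatible family $(x_\alpha)_{\alpha<\beta}$, $x_\alpha\in F(\alpha)$, there is a compatible family $(y_\alpha)_{\alpha<\gamma}$ with $y_\alpha=x_\alpha$ for $\alpha<\beta$.

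The extension is built by transfinite recursion on $\delta$ with $\beta\le\delta\le\gamma$: I construct compatible families on $U_\delta$ extending the given one, each stage extending the previous ones.
\begin{enumerate}
\item[(a)] \emph{Successor step} $\delta=\delta'+1\le\gamma$. A compatible family on $U_{\delta'+1}=|\SI|_{\le\delta'}$ is the same thing as one element of $F(\delta')$. If $\delta'$ is itself a successor $\delta''+1$, the family on $U_{\delta'}$ amounts to an element of $F(\delta'')$, and condition (\romannumeral 1) lets me lift it along the epimorphism $F(\delta')\to F(\delta'')$. If $\delta'$ is a limit ordinal, the family on $U_{\delta'}$ is an element of $\varprojlim_{\alpha<\delta'}F(\alpha)$, and condition (\romannumeral 2) lets me lift it to $F(\delta')$. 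In $R\md$ epimorphisms are surjections, so these lifts exist.
\item[(b)] \emph{Limit step} $\delta$. The families already chosen for $\beta\le\delta'<\delta$ are mutually compatible by construction. Their union is therefore a compatible family on $U_\delta=\bigcup_{\delta'<\delta}U_{\delta'}$. No lifting is needed here, because elements of $\varprojlim$ are just compatible families.
\end{enumerate}
Taking $\delta=\gamma$ produces the required global section. The choices are made with the axiom of choice along the recursion.

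The only point needing care is bookkeeping: which condition is used at which successor step. The successor of a limit ordinal uses (\romannumeral 2); every other successor uses (\romannumeral 1). One must also make sure that the recursion really yields a single coherent family, which is why I fix all the choices in one transfinite recursion rather than independently. This part is not a serious obstacle.

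For the consequence, I invoke the identification from \ref{flabby-systems}: under the isomorphism between inverse systems and sheaves on $|\SI|$, the higher derived limits are identified with sheaf cohomology. Flabby sheaves are acyclic, so $\sideset{}{^{n}}\varprojlim F=H^n(|\SI|,\tilde{F})=0$ for $n\ge1$.
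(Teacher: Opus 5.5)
Your proof is correct and is essentially the standard transfinite-recursion argument of the cited source (\cite[Lemma A.3.13]{Ntc}). The paper does not reproduce that argument; it only observes that Neeman's proof for abelian groups carries over to $R\md$ because epimorphisms there are surjective. The one case you skip, $\delta'=0$ (which occurs only when $\beta=0$, i.e.\ $U=\emptyset$), is trivial, and your deduction $\sideset{}{^{n}}\varprojlim F=H^n(|\SI|,\tilde F)=0$ via the identification in \ref{flabby-systems} is exactly how the paper obtains the consequence.
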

		
		\begin{rem}
			This result will only be used in the Remark after Theorem \ref{thm:quis-abelian-cat-ab4*-gen} as an example of its application.
		\end{rem}
		
	\end{cosa}

\section{Homotopy limits of complexes}\label{Section:homotopy-limit}
	In this section we will assume the abelian category $\SA$ satisfies $\mathsf{AB3^*}$, that is, small products (and thus small limits) exist. With this in mind, we introduce the homotopy limit of a diagram of complexes in $\CCC(\SA)$. Its construction is dual to that of the homotopy colimit as it was presented in \cite[\S 2]{AJS}. 
	
	The diagrams we consider have the shape of a \textit{filtered} poset. In constrast with the previous section, we will denote it by $\Lambda$, and its elements by $s \in \Lambda$. Therefore, the notation $\textbf{s} \in \CN'_k(\Lambda)$ refers to a non-degenerate chain of morphisms of $\Lambda$ 
	\[
		\textbf{s} \equiv s_0 \to \dots \to s_k. 
	\]
	We consider functors $F \in \cat(\Lambda^\op, \CCC(\SA))$, that is, cofiltered inverse systems of shape $\Lambda$ in $\CCC(\SA)$.
	
	With the notation introduced in \ref{higher-derived-limits}, we construct a double complex associated to $F$, which we denote by $\left(B^{\prod}(F),d_h,d_v\right)$. Define, for $j, k \in \ZZ$:
	\begin{equation*}
		\begin{aligned}
			B^{\prod}(F)^{kj} \coloneqq \begin{cases}
				0 &\text{ if } k < 0 \\
					\prod_{ \textbf{s} \in \CN'_k(\Lambda)} F(s_0)^j &\text{ if } k \geq 0,
				\end{cases}
		\end{aligned}
	\end{equation*}
	where $s_0$ must henceforth be understood as the first object in the chain $\textbf{s}$ indicating the corresponding factor in the product. Observe that we are only considering the non-degenerate chains of morphisms of $\Lambda$, \textit{i.e.}, we consider the reduced nerve of $\Lambda$. Clearly, this notation allows us to make a more compact definition than the one given in \cite{AJS} of the analog double complex, which we would denote in this setting by $B^{\coprod}(F)$. The vertical differential $d_v$ is the one induced by the complexes themselves. Namely, in horizontal degree $k$, it is the product over $\CN'_k(\Lambda)$ of the differentials of each complex. As for the horizontal differential 
	\[
		B^{\prod}(F)^{kj} = \prod_{ \textbf{s} \in \CN'_k(\Lambda)}F(s_0)^j \xrightarrow{\,d_h^{kj}\,} B^{\prod}(F)^{k+1j} = \prod_{ \textbf{s} \in \CN'_{k+1}(\Lambda)}F(s_0)^j,
	\] we define it using the universal property of the product, where we denote by $p_{\textbf{s}}$ the canonical projection indicated by $\textbf{s} \in \CN'_k(\Lambda)$
	\[
		\prod_{\textbf{s} \in \CN'_k(\Lambda)}F(s_0)^j \lto F(s_0)^j,
	\]
	completely analogously as the way the differential of the complex $N'^\bullet(F)$ was defined (see \ref{eq: differential-N-complex}). Explicitly, for $\textbf{s} \in \CN'_{k+1}(\Lambda)$,
	\[ p_{\textbf{s}}d_h^{kj} = F(s_0 \to s_1)^j p_{\,\widehat{\textbf{s}}_0} + \sum_{i=1}^{k+1} (-1)^i \id_{F(s_0)^j} p_{\,\widehat{\textbf{s}}_i}.\]
	
	The fact that $d_v$ is a differential is obvious, while proving the same for $d_h$ is a routinary but tedious check. One can also check that with these definitions $(B^{\prod}(F),d_h,d_v)$ becomes a double complex in the sense of Section \ref{Section:Conventions}. 
	
	As soon as we know $(B^{\prod}(F),d_h,d_v)$ is a double complex, we know we can totalize (as discussed in \ref{totalization}), and so we make the following definition. Given a functor $F \in \cat(\Lambda^\op, \CCC(\SA))$, we define the \textit{homotopy limit of $F$} to be 
	\[ \holim{}F \coloneqq \Tot^{\prod}\left(B^{\prod}(F)\right). \]
	
	\noindent
	The construction is clearly functorial, so, as a consequence, we have defined a functor
	\[\holim{} \!\!\! = \Tot^{\prod} \circ B^{\prod} \colon \cat(\Lambda^\op, \CCC(\SA)) \lto \CCC(\SA).\]
	
	The next lemma is an observation, perhaps obvious after the construction.
	
	\begin{lem}\label{complexes-concentrated-degree-0}
		Let $\SA$ and $\Lambda$ be as before and let $F \colon \Lambda^\op \to \SA$ be an inverse system, which can be thought as an inverse system $\Lambda^\op \to \CCC(\SA)$ (still denoted by $F$) via the identification of $\SA$ with the degree-$0$ complexes of $\CCC(\SA)$. If $\SA$ satisfies $\mathsf{AB4^*}$, then
		\[ \h^n(\!\!\!\holim{} \! F) \cong \sideset{}{^{n}}\varprojlim F.\]
	\end{lem}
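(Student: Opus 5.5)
The plan is to show that, for a system concentrated in degree $0$, the double complex $B^{\prod}(F)$ has a single nonzero row, so its totalization by products is exactly Roos' reduced complex $N'^\bullet(F)$. We then read off its cohomology using the Remark in \ref{higher-derived-limits}.

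First I unwind the definitions. Since each $F(s)$ is a complex concentrated in degree $0$, we get $B^{\prod}(F)^{kj} = 0$ whenever $j \neq 0$. For $k \geq 0$ we have $B^{\prod}(F)^{k0} = \prod_{\textbf{s}\in\CN'_k(\Lambda)} F(s_0) = N'^k(F)$. The vertical differential $d_v$ vanishes identically, because it is induced by the (zero) differentials of the $F(s)$. The horizontal differential $d_h^{k0}$ is defined componentwise by exactly the formula \eqref{eq: differential-N-complex}, restricted to non-degenerate chains. That is, $d_h^{k0}$ is the differential of $N'^\bullet(F)$.

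Next, in each total degree $n$ the product $\prod_{k+j=n} B^{\prod}(F)^{kj}$ has only one possibly nonzero factor, namely $B^{\prod}(F)^{n0} = N'^n(F)$; for $n<0$ it is $0$. So no issue with products versus coproducts arises. By \eqref{differential-totalization-products}, $p_{n+1,0}\,d^n = d_h^{n0}p_{n0} + (-1)^{n+1} d_v^{n+1,-1}p_{n+1,-1}$, and the second term vanishes. Hence $\Tot^{\prod}(B^{\prod}(F)) = N'^\bullet(F)$ as complexes, with no sign discrepancy, because the sign only multiplies the vertical part.

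Finally, by the Remark in \ref{higher-derived-limits}, under $\mathsf{AB4^*}$ the functors $T'^k = \h^k(N'^\bullet(-))$ form a universal $\delta$-functor with $T'^0 \cong \varprojlim$. Since $\Lambda$ is a poset, the reduced nerve gives a well-defined complex. Universality then yields $\h^n(N'^\bullet(F)) \cong \varprojlim^n F$.

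The only point I expect to need care is this last one. The Remark justifies universality of $T'^\bullet$ via Roos' argument in concrete settings. If one prefers to avoid that, the fallback is to compare $N'^\bullet(F)$ with $N^\bullet(F)$ directly, using the standard fact that the normalized (non-degenerate) cochain complex of a cosimplicial object is a quasi-isomorphic retract of the full one. Neeman's Lemma A.3.2 then gives $\h^n(N^\bullet(F)) \cong \varprojlim^n F$. The rest is bookkeeping.
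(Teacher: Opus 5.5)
Your proposal is correct and matches the paper's proof: for a system concentrated in degree $0$, $B^{\prod}(F)$ has a single nonzero row and zero vertical differential, so $\Tot^{\prod}(B^{\prod}(F)) = N'^\bullet(F)$, and the claim then follows from the Remark in \ref{higher-derived-limits}. Your fallback is a valid extra safeguard against the concreteness caveat in Roos' argument, though the paper does not use it: you identify $N'^\bullet(F)$ with the normalized cochain complex of the cosimplicial object underlying $N^\bullet(F)$ and then invoke Neeman's Lemma A.3.2.
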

	\begin{proof}
		The complexes are concentrated in degree $0$, so $B^{\prod}(F)$ only has information in vertical degree $0$. The horizontal differential $d_h$ is identical to that of the complex $N'^\bullet(F)$ discussed in \ref{higher-derived-limits}, and after totalization, since there are no nonzero vertical differentials in any horizontal degree, we get
		\[\holim{}F = N'^\bullet(F).\]
		The result follows, since $\h^n(N'^\bullet(F)) \cong \sideset{}{^{n}}\varprojlim F$ holds (see the Remark in \ref{higher-derived-limits}).
	\end{proof}
	
	The objective of the remainder of this section is exploring the basic properties that the homotopy limit satisfies. 
	The main task is exploring the relationship between the limit and the homotopy limit. Let us begin with the next proposition:
	
	
	\begin{prop}\label{prop:homotopy-equiv-finite-system}
		Let $\Lambda$ be a finite poset with maximum $\mu$, $\SA$ an abelian category satisfying $\mathsf{AB3^*}$ and $F \colon \Lambda^\op \to \CCC(\SA)$ be any diagram. Then, $F(\mu) = \varprojlim F$ is homotopically equivalent to $\!\!\!\holim{} F$.
	\end{prop}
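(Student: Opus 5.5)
We will construct explicit chain maps $\iota \colon F(\mu) \to \holim{} F$ and $\pi \colon \holim{} F \to F(\mu)$ with $\pi\iota = \id_{F(\mu)}$, and exhibit a homotopy $h$ with $\id - \iota\pi = dh + hd$ on $\holim{} F$. First we fix the shape of the complex. Since $\Lambda$ is finite, $\CN'_k(\Lambda)$ is empty for $k$ large. Hence $B^{\prod}(F)$ is horizontally bounded, and in each total degree $n$ the object $(\holim{}F)^n = \prod_{k\ge 0}\prod_{\textbf{s}\in\CN'_k(\Lambda)} F(s_0)^{n-k}$ is a finite product.

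The map $\iota$ is the ``coaugmentation'': its component at a chain $\textbf{s}$ of length $k=0$, \ie $\textbf{s}=s_0$, is $F(s_0\to\mu)$, and its components at chains with $k\ge 1$ are zero. We check that this is a chain map. The vertical part is clear. Horizontally, for a $1$-chain $s_0\to s_1$ we get $F(s_0\to s_1)F(s_1\to\mu) - F(s_0\to\mu)=0$ by functoriality, so $\iota$ lands in the kernel of $d_h$. The map $\pi$ is the projection onto the factor indexed by the $0$-chain $\mu$. It is a chain map because, by \eqref{differential-totalization-products}, the $\mu$-component of $d$ only involves $d_h$ coming from chains of length $-1$ (there are none) and the vertical differential of $F(\mu)$. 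Clearly $\pi\iota=\id$.

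For the homotopy we use the standard cone-type contraction coming from the maximum, namely ``prepending'' or ``appending'' $\mu$. Define $h^{kj}\colon B^{kj}\to B^{k-1,j}$ on a chain $\textbf{t}=t_0\to\dots\to t_{k-1}$ as follows. If $t_{k-1}\neq\mu$, set $p_{\textbf{t}}h = (-1)^{k}p_{\textbf{t}\to\mu}$, where $\textbf{t}\to\mu$ is the chain with $\mu$ appended; this is non-degenerate and has the same first object $t_0$, so no transition map is needed. If $t_{k-1}=\mu$, set $p_{\textbf{t}}h=0$. Appending at the end, rather than prepending, is the right choice, because the coefficient on face $\widehat{\textbf{s}}_{k+1}$ in $d_h$ is the identity on $F(s_0)$.

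The main computation is to verify $d_hh+hd_h=\id-\iota\pi$ on $N'$-type rows, that is, separately in each vertical degree $j$. This is the usual simplicial identity argument: the faces of $\textbf{t}\to\mu$ other than the last one cancel against $h$ applied to faces of $\textbf{t}$, and the last face returns $\textbf{t}$. Chains ending at $\mu$ need separate bookkeeping, since for them $h$ vanishes and the face removing $\mu$ contributes instead. In degree $k=0$ with $\textbf{t}=\mu$, what remains is the correction $\iota\pi$. The sign $(-1)^k$, or an adjusted sign, must be tuned so that these cancellations work. We expect this sign and case bookkeeping to be the main obstacle. Finally, $h$ commutes with $d_v$ up to the totalization sign $(-1)^r$, since it shifts horizontal degree by $-1$. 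So we replace $h$ on column $k$ by $(-1)^{?}h$ with the appropriate parity, so that $h$ anticommutes correctly with the signed vertical differential. Then $dh+hd$ on $\Tot^{\prod}$ reduces to $d_hh+hd_h$, which gives the homotopy equivalence.
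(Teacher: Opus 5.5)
Your proposal is correct and follows essentially the same route as the paper: the same coaugmentation $\iota$ and projection $\pi$, and the same homotopy that appends $\mu$ to chains not ending at $\mu$ and is zero on chains ending at $\mu$. Your sign $(-1)^k$ (indexed by the source column) is exactly the negative of the paper's $(-1)^{k-1}$, which matches your target $\id-\iota\pi$ against the paper's $\psi\varphi-1$. You also do not need any extra parity adjustment for the vertical part: $h$ commutes with $d_v$ and lowers the column by one, so under the $(-1)^r$ totalization convention the terms $(-1)^{k-1}d_v h+(-1)^k h d_v$ cancel automatically, just as the $d_{F(s_0)}^{j-1}$ terms cancel in the paper's computation.
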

	\begin{proof}
		With $\mu = \max\{s \mid s \in \Lambda\}$, we define two maps of double complexes. First, by thinking of $F(\mu)^\bullet$ as a double complex concentrated in horizontal degree $0$, 
		\begin{equation*}\label{eq: map1-finite-poset}
			B^{\prod}(F)^{\bullet\bullet} \lto F(\mu)^{0\bullet}
		\end{equation*}
		by setting $B^{\prod}(F)^{0j} = \prod_{s \in \Lambda} F(s)^j \to F(\mu)^j$ to be the projection and, of course, the morphisms on the rest of horizontal degrees to be the zero morphism. It is trivially a morphism of double complexes.
		
		On the other hand, define
		\begin{equation}\label{eq: map2-finite-poset}
			F(\mu)^{0\bullet} \lto B^{\prod}(F)^{\bullet\bullet}
		\end{equation}
		by letting the map in horizontal deree $0$, for each $j \in \ZZ$, be defined by the commutative diagrams
		\[
			\begin{tikzpicture}
				\matrix (m) [matrix of math nodes,
				row sep=3em, column sep=3em,
				text height=2ex,
				text depth=0.25ex]{
					F(\mu)^j &  \prod_{ s \in \Lambda} F(s)^j = B^{\prod}(F)^{0j}  \\
					 & F(s)^j.  \\
				};
				\path[->, font=\scriptsize]
				(m-1-1) edge node[auto] {} (m-1-2)
				(m-1-1) edge node[below, sloped] {$F(s \to \mu)^j$} (m-2-2)
				(m-1-2) edge node[right] {} (m-2-2);
			\end{tikzpicture}
		\]
		The reader can check that (\ref{eq: map2-finite-poset}) is a morphism of double complexes.
		
		If we totalize these two morphisms by products, we obtain
		\[ 
			\varphi \colon \!\!\!\holim{}F \lto \Tot^{\prod}\left(F(\mu)^{0\bullet}\right) = F(\mu)
		\]
		for the first one, and 
		\[
			\psi \colon \Tot^{\prod}\left(F(\mu)^{0\bullet}\right) = F(\mu) \lto \holim{} F
		\]
		for the second one. 
		
		Notice that $\varphi \psi = \id_{F(\mu)}$ because composing the two morphisms of double complexes in this order gives the identity in $F(\mu)$ (by definition of the second map of double complexes) and totalizing is functorial. On the other hand, we must show that $\psi \varphi$ is homotopically equivalent to the identity. To accomplish this, we will define a homotopy 
		\[
			\sigma = \left(\sigma^n \colon (\!\!\!\holim{} F)^n \lto (\!\!\!\holim{}F)^{n-1}\right)_{n \in \ZZ} 
		\]
		and check that
		\begin{equation}\label{eq: homotopy-equiv-finite-subsets-maximum}
			(\psi \varphi - 1)^n = d^{n-1} \sigma^n + \sigma^{n+1} d^n
		\end{equation}
		for each $n \in \ZZ$, where by $d$ we mean the differential of the homotopy limit of $F$. Since at each degree $n \in \ZZ$ we have
		\[
			(\!\!\!\holim{}F)^n = \prod_{k+j=n} \,\, \prod_{ \textbf{s} \in \CN'_k(\Lambda)} F(s_0)^j,
		\]
		we will denote the projections as shown below
		\[
			\prod_{k+j=n} \,\, \prod_{ \textbf{s} \in \CN'_k(\Lambda)} F(s_0)^j \overset{p_j}{\lto} \prod_{ \textbf{s} \in \CN'_k(\Lambda)} F(s_0)^j \overset{p_{\textbf{s}}}{\lto} F(s_0)^j.
		\]
		and we will actually check the equality (\ref{eq: homotopy-equiv-finite-subsets-maximum}) above holds after composing with these two projections in degree $n$. Observe that after being given an $n \in \ZZ$ and fixing $j \leq n$,  by applying $p_j$ we obtain that $k=n-j$. We will use this henceforth without mention. Construct the homotopy by setting the morphism $\sigma^n$ from degree $n$ to degree $n-1$, to be defined as follows. Let $j \leq n-1$ and $\textbf{s} \in \CN'_{n-j-1}(\Lambda)$ be a chain of morphisms, and define both $\zeta^j$ and $\sigma^n$ by the universal property of the product, as the diagram below dictates:
		\[
			\begin{tikzpicture}
				\matrix (m) [matrix of math nodes,
				row sep=3em, column sep=5em,
				text height=2ex,
				text depth=0.25ex]{
					\prod_{i \leq n} \prod_{ \textbf{t} \in \CN'_{n-j}(\Lambda)} F(t_0)^j &  \prod_{j \leq n-1} \prod_{ \textbf{s} \in \CN'_{n-j-1}(\Lambda)} F(s_0)^j  \\
					\prod_{ \textbf{t} \in \CN'_{n-j}(\Lambda)} F(t_0)^j & \prod_{ \textbf{s} \in \CN'_{n-j-1}(\Lambda)} F(s_0)^j  \\
					& F(s_0)^j \\
				};
				\path[->, font=\scriptsize]
				(m-1-1) edge node[auto] {$\sigma^n$} (m-1-2)
				(m-1-1) edge node[left] {$p_j$} (m-2-1)
				(m-1-2) edge node[right] {$p_j$} (m-2-2)
				(m-2-1) edge node[auto] {$\zeta^j$} (m-2-2)
				(m-2-1) edge node[below] {$f_{\textbf{s}}^j$} (m-3-2)
				(m-2-2) edge node[right] {$p_{\textbf{s}}$} (m-3-2);
			\end{tikzpicture}
		\]
		where the subscript $\textbf{s}$ in $f_{\textbf{s}}$ is determined by the chain of maps $\textbf{s} \in \CN'_{n-j-1}(\Lambda)$ corresponding to the factor onto which we are projecting (thus completely determining $\zeta^j$) and it is such that
		\begin{equation}\label{eq: maps-defining-homotopy}
		 f_{\textbf{s}}^j = \begin{cases}
		 	0 &\text{ if } s_{n-j-1} = \mu, \\
		 	(-1)^{k-1} \id_{F(s_0)^j}p_{\textbf{s} \to \mu} &\text{ if } s_{n-j-1} \neq \mu.
		 \end{cases}
		\end{equation}
		By $\textbf{s} \to \mu$ we mean the non-degenerate chain belonging to $\CN'_{n-j}(\Lambda)$ obtained after adjoining the morphism $s_{n-j-1} \to \mu$ to the chain $\textbf{s} \in \CN'_{n-j-1}(\Lambda)$.
		
		To check the promised equality (\ref{eq: homotopy-equiv-finite-subsets-maximum}), first we observe that for $\textbf{s} \in \CN'_{n-j}(\Lambda)$, 
		\small
		\begin{equation}\label{cases-to-compare}
			p_{\textbf{s}} p_j (\psi \varphi - 1)^n = \begin{cases}
				-p_{\textbf{s}} p_j &\text{ if } j\neq n \\
				0 &\text{ if } j=n \text{ and } s_{n-j}=s_0= \mu \\
				F(s_0 \to \mu)^j p_{\mu} p_j - p_{s_0}p_j  &\text{ if } j=n \text{ and } s_{n-j}=s_0\neq \mu.
			\end{cases}
		\end{equation} 
		\normalsize
		The case $j\neq n$ follows from the fact that, in that case, $n-j \neq 0$ and then both $\varphi$ and $\psi$ are zero after postcomposing with $p_j$. Similarly, the other cases (which we only differentiate because in one case we obtain $0$) follow easily from the definition of $\varphi$ and $\psi$.
		
		Now let us show that the other side (\textit{i.e.}, the right hand side) of the equality (\ref{eq: homotopy-equiv-finite-subsets-maximum}) agrees with (\ref{cases-to-compare}): fixing $j \in \ZZ$ and $\textbf{s} \in \CN'_{n-j}(\Lambda)$, composition with the projections gives
		\[
			p_{\textbf{s}} p_j \left(d^{n-1}\sigma^n + \sigma^{n+1}d^n\right) = p_{\textbf{s}} p_j d^{n-1}\sigma^n + p_{\textbf{s}} p_j \sigma^{n+1}d^n,
		\] 
		and so we inspect each summand separately.
		
		On the first one, from the definition of $\sigma$ above and the description of the differential of the totalization by products given in (\ref{differential-totalization-products}), we obtain
		\small
		\begin{equation}\label{summand-one}
			\begin{aligned}
				p_{\textbf{s}} p_j d^{n-1}\sigma^n &=\\
				&= p_{\textbf{s}} p_j \left(\sum_{j \in \ZZ} d_h^{n-j-1 \, j} + (-1)^{n-j}d_v^{n-j \, j-1}\right) \sigma^n\\
				&=\left(F(s_0 \to s_1)^j p_{\widehat{\textbf{s}}_0} p_j + \sum_{i=1}^{n-j} (-1)^i \id_{F(s_0)^j}p_{\widehat{\textbf{s}}_i} p_j + (-1)^{n-j} d_{F(s_0)}^{j-1} p_{\textbf{s}}p_{j-1}\right) \sigma^n \\
				&= F(s_0 \to s_1)^j f_{\widehat{\textbf{s}}_0}^j p_j + \sum_{i=1}^{n-j} (-1)^i \id_{F(s_0)^j} f_{\widehat{\textbf{s}}_i}^j p_j + (-1)^{n-j} d_{F(s_0)}^{j-1} f_{\textbf{s}}^{j-1} p_{j-1},
			\end{aligned}
		\end{equation} 
		\normalsize
		where (\ref{differential-totalization-products}) is used in the second equality.
		
		As for the second summand, again by the definitions and (\ref{differential-totalization-products}) we can write: 
		\begin{equation}\label{summand-two}
			\begin{aligned}
				 p_{\textbf{s}} p_j \sigma^{n+1}d^n = f_{\textbf{s}}^j p_j d^n &= f_{\textbf{s}}^j \left(d_h^{n-j j} p_j + (-1)^{n-j+1}d_v^{n-j+1 j-1} p_{j-1}\right) \\
				 &=  f_{\textbf{s}}^j d_h^{n-j j} p_j + (-1)^{n-j+1} f_{\textbf{s}}^j d_v^{n-j+1 j-1} p_{j-1}.
			\end{aligned}
		\end{equation}
		Depending on whether $f_{\textbf{s}}^j$ is zero or not (an thus can be factored as shown before, see (\ref{eq: maps-defining-homotopy})), we can develop this expression further. It will be done below, when discussing cases.
		
		Combining the expressions we have obtained for the two summands, we distinguish the same cases as we did in (\ref{cases-to-compare}) to check the equality
		\[
			p_{\textbf{s}}p_j (\psi \varphi - 1)^n = 	p_{\textbf{s}} p_j \left(d^{n-1}\sigma^n + \sigma^{n+1}d^n\right).
		\]
		
		\textbf{Case $j \neq n$}. We distinguish two subcases, whether $s_{n-j} = \mu$ or $s_{n-j}\neq \mu$:
		
		\textbf{(a)} If $s_{n-j} = \mu$, by definition of $f_{\textbf{s}}$, the expression corresponding to the first summand (\ref{summand-one}) vanishes except for the last term in $\sum_{i=1}^{n-j} (-1)^i \id_{F(s_0)^j} f_{\widehat{\textbf{s}}_i}^j p_j$, that is,
		\[
			(-1)^{n-j} \id_{F(s_0)^j} f_{\widehat{\textbf{s}}_{n-j}}^j p_j.
		\]
		In the expression corresponding to the second summand (\ref{summand-two}), everything vanishes for the same reason (the definition of $f_{\textbf{s}}$, since $s_{n-j} = \mu$). Therefore, all that remains is the expression above, and since $s_{n-j-1} \neq \mu$, the map $f_{\widehat{\textbf{s}}_{n-j}}^j$ factors, and we get
		\begin{equation*}
			\begin{aligned}
				&(-1)^{n-j} \id_{F(s_0)^j} f_{\widehat{\textbf{s}}_{n-j}}^j p_j = (-1)^{n-j} (-1)^{n-j-1} \id_{F(s_0)^j} p_{\textbf{s}} p_j = -\id_{F(s_0)^j} p_{\textbf{s}} p_j.
			\end{aligned}		
		\end{equation*}
		
		\textbf{(b)} Let us assume $s_{n-j} \neq \mu$. In this case, all summands survive in both (\ref{summand-one}) and (\ref{summand-two}), and thus we can factor the instances of $f_{\textbf{s}}^j$. As a result, we obtain the following expression combining both summands: 
		
		\begin{equation*}
			\begin{aligned}
				(\ref{summand-one}) + (\ref{summand-two}) =& (-1)^{n-j-1} F(s_0 \to s_1)^j p_{\widehat{\textbf{s}}_{0} \to \mu} p_j \\
				&+ \sum_{i=1}^{n-j} (-1)^{i+n-j-1} \id_{F(s_0)^j} p_{\widehat{\textbf{s}}_{i} \to \mu} p_j  +  (-1)^{2(n-j)} d_{F(s_0)}^{j-1}  p_{\textbf{s} \to \mu} p_{j-1}\\
				&+ (-1)^{n-j} F(s_0 \to s_1)^j p_{\widehat{(\textbf{s} \to \mu)}_{0} } p_j + \sum_{i=1}^{n-j+1} (-1)^{i+n-j}  \id_{F(s_0)^j} p_{\widehat{(\textbf{s} \to \mu)}_{i}} p_j \\
				&+ (-1)^{2(n-j)+1} d_{F(s_0)}^{j-1} p_{\textbf{s} \to \mu} p_{j-1}  \\
				=& (-1)^{2(n-j)+1} \id_{F(s_0)^j} p_{\widehat{(\textbf{s} \to \mu)}_{n-j+1}} p_j = - \id_{F(s_0)^j} p_{\textbf{s}} p_j.
			\end{aligned}
		\end{equation*}
		Clearly, every term cancels with another except for the term corresponding to the index $i = n-j+1$ in the third line, as shown in the last two equalities. But this lets us conclude for this case, because it coincides with the expression we obtained in \textbf{(a)}, as well as with the expression in (\ref{cases-to-compare}).
		
		\textbf{Case $j=n$}. Although we distinguish two cases inside this one, it is useful to make a general observation first. Since $n-j=0$ (the chains of morphisms have length $0$ and therefore consist only of an object), observe that in the first summand (\ref{summand-one}) only the last term is nonzero, for the horizontal differential coming from degree $-1$ is zero, thus the first two terms (the second being a sum) vanish. Thus, the sum of (\ref{summand-one}) and (\ref{summand-two}) becomes
		\begin{equation}\label{case-j=n-expression}
			d_{F(s_0)}^{n-1} f_{s_0}^{n-1} p_{n-1} + f_{s_0}^{n} d_h^{0 n} p_n - f_{s_0}^j d_v^{1 n-1} p_{n-1}.
		\end{equation}
		
		\textbf{(a)} Let us consider the subcase $j=n$ and $s_{n-j}=s_0=\mu$. Just as in the first case we studied, $f_{s_0}=0$, so every term in (\ref{case-j=n-expression}) (and thus the whole expression) vanishes. This agrees with (\ref{cases-to-compare}), so we are done.
		
		\textbf{(b)} If $j=n$ and $s_{n-j}=s_0 \neq \mu$, then the morphisms $f_{s_0}^j$ do not vanish; so we factor them and the expression (\ref{case-j=n-expression}) becomes
		\small
		\begin{equation*}
			\begin{aligned}
				d_{F(s_0)}^{n-1}& \id_{F(s_0)^{n-1}} p_{s_0 \to \mu} p_{n-1} + \id_{F(s_0)^n} p_{s_0 \to \mu} d_h^{0n} p_n - \id_{F(s_0)^n} p_{s_0 \to \mu} d_v^{1 n-1} p_{n-1} \\
				=& d_{F(s_0)}^{n-1} \id_{F(s_0)^{n-1}} p_{s_0 \to \mu} p_{n-1} + F(s_0 \to \mu) p_{\mu} p_n - \id_{F_(s_0)^n} p_{s_0} p_n \\
				&- d_{F(s_0)}^{n-1} \id_{F(s_0)^{n-1}} p_{s_0 \to \mu} p_n = \\
				=& F(s_0 \to \mu) p_{\mu} p_n - \id_{F(s_0)^n} p_{s_0} p_n,
			\end{aligned}
		\end{equation*}
		\normalsize
		because after the first equality the first and last term cancel each other. The result agrees with that of (\ref{cases-to-compare}), and we are done.
	\end{proof}
	
	The previous Proposition exhibits the interest, for computing (homotopy) limits, of the finite subsets with maximum of a given poset $\Lambda$. After all, the limit of a diagram $F \colon \Lambda^\op \to \SC$, where $\SC$ is any complete category, is the limit of its subdiagrams with maximum (in $\Lambda$). Let us make this precise: define the set 
	\[
		\pfinmax(\Lambda) \coloneqq \{E \subset \Lambda \mid E \text{ is finite and has a maximum element}\}
	\] 
	and denote by $\mu_E = \max\{s \mid s \in E\}$ for any $E \in \pfinmax(\Lambda)$. Obviously, the relation $E' \subset E$ for any $E, E' \in \pfinmax(\Lambda)$ determines a partial order in $\pfinmax(\Lambda)$. Moreover, the fact that $\Lambda$ is filtered implies that so is $\pfinmax(\Lambda)$.
	The assertion is that, if we denote by $F\vert_E \colon E^\op \to \SC$, for any subset $E$ of $\Lambda$, the restriction of $F$ to the subcategory $E^\op$, then
	\begin{equation}\label{limit-finite-subsystems-maximum}
		\varprojlim_{s \in \Lambda} F(s) = \varprojlim_{E \in \pfinmax(\Lambda)} F(\mu_E) = \varprojlim_{E \in \pfinmax(\Lambda)} \varprojlim_{s \in E} F(s).
	\end{equation}
	The only completely non obvious statement is the first equality. The diagram $\pfinmax(\Lambda)^\op \to \SC$ given by $E \mapsto F(\mu_E)$ is just a ``bigger'' version of the diagram $F$, where for each $s \in \Lambda$ one usually has more than one $E \in \pfinmax(\Lambda)$ with $\mu_E=s$. But among those elements $E, E'$ of $\pfinmax(\Lambda)$ with $\mu_E = s = \mu_{E'}$, provided they are comparable ($E' \subset E$), the maps in the diagram $F(\mu_E) = F(s) \to F(s) = F(\mu_{E'})$ are the identity. For different, comparable elements $s, t \in \Lambda$ (say $t \leq s$) and subsets $E, E' \in \pfinmax(\Lambda)$ such that $\mu_E= s$, $\mu_{E'}=t$ and $E' \subset E$, the map  $F(\mu_E) = F(s) \to F(t) = F(\mu_{E'})$ is just $F(t \to s)$. Therefore, one easily sees that $\varprojlim_{s \in \Lambda} F(s)$ is the universal limit cone of the diagram $E \mapsto F(\mu_E)$ (or vice versa).
	
	In other words, there is a functor
	\[
		u \colon \pfinmax(\Lambda)^\op \lto \Lambda^\op
	\]
	mapping $E$ to its maximum $\mu_E$ and mapping maximums in the obvious way on morphisms in $\pfinmax(\Lambda)^\op$. The functor $u$ induces, by precomposition, a functor
	\[
		u^* \colon \cat(\Lambda^\op, \SC) \lto \cat(\pfinmax(\Lambda)^\op, \SC)
	\] taking $F \colon \Lambda^\op \to \SC$ into $u^* F = F \circ u \colon \pfinmax(\Lambda)^\op \to \SC$, which is precisely the inverse system for which we take limits in the middle term of (\ref{limit-finite-subsystems-maximum}). Therefore, the first equality of (\ref{limit-finite-subsystems-maximum}) (and thus the assertion from before) can be thought as the isomorphism between the functors
	\begin{equation}\label{limit-finite-subsystems-u*}
		\varprojlim_{\Lambda} = \varprojlim_{\pfinmax(\Lambda)} \circ u^*.
	\end{equation}
	The above isomorphism makes sense whenever the category $\SC$ is complete or, at least, if the purported limits exist.
	
	\begin{lem}\label{lemma:holim-as-lim}
		Let $\SA$ be an abelian category satisfying $\mathsf{AB3^*}$, $\Lambda$ a filtered poset, and $F \colon \Lambda^\op \to \CCC(\SA)$ any inverse system. If, for any $E \subset \Lambda$, we denote by $F\vert_E$ the restriction of $F$ to the subcategory $E^\op$ of $\Lambda^\op$, the homotopy limit of $F$ can be computed as the limit of the diagram $H \colon \pfinmax(\Lambda)^\op \to \CCC(\SA)$ defined by $E \mapsto \holim{}F\vert_E$, \textit{i.e.},
		\[ \holim{} F \cong \varprojlim_{E \in \pfinmax(\Lambda)} \!\!\! \holim{}F\vert_E.\]
	\end{lem}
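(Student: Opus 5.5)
Since limits in $\CCC(\SA)$ are computed degreewise, and $\Tot^{\prod}$ in each degree is a product of the bigraded pieces, which commutes with limits, the statement reduces to a statement about the double complexes $B^{\prod}$. So I will show that there is a natural isomorphism of double complexes
\[
B^{\prod}(F) \cong \varprojlim_{E \in \pfinmax(\Lambda)} B^{\prod}(F\vert_E),
\]
and then apply $\Tot^{\prod}$. Here I use that $\Tot^{\prod}$ commutes with limits: limits of double complexes are computed bidegreewise, and a product of limits is the limit of the products.

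For the double complexes I work bidegree by bidegree. For $E' \subset E$ in $\pfinmax(\Lambda)$, we have $\CN'_k(E') \subset \CN'_k(E) \subset \CN'_k(\Lambda)$. The transition map $B^{\prod}(F\vert_E)^{kj} \to B^{\prod}(F\vert_{E'})^{kj}$ is the projection onto the subproduct indexed by $\CN'_k(E')$. These projections clearly commute with the horizontal differentials: the face formula for a chain in $E'$ only involves chains in $E'$, since deleting an object from a chain in $E'$ stays in $E'$. They also commute with the vertical differentials, so $E \mapsto B^{\prod}(F\vert_E)$ is an inverse system of double complexes. The projections $B^{\prod}(F)^{kj} \to B^{\prod}(F\vert_E)^{kj}$ are compatible with these transitions and with both differentials, so they induce a morphism of double complexes
\[
B^{\prod}(F) \lto \varprojlim_{E} B^{\prod}(F\vert_E).
\]

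The remaining point is that in each bidegree $(k,j)$ this morphism is an isomorphism. The key combinatorial fact is that every chain $\textbf{s} \in \CN'_k(\Lambda)$ lies in some $E \in \pfinmax(\Lambda)$. Indeed, $\{s_0, \dots, s_k\}$ is a finite chain with maximum $s_k$, so $\CN'_k(\Lambda) = \bigcup_E \CN'_k(E)$. This is a union over a filtered poset, and I think filteredness of $\pfinmax(\Lambda)$ is not even needed here.

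A general fact then finishes the argument. If a set $S$ is a union $\bigcup_E S_E$ over a poset with $S_{E'} \subset S_E$ for $E' \subset E$, and $X_s$ are objects, then
\[
\prod_{s \in S} X_s \cong \varprojlim_E \prod_{s \in S_E} X_s,
\]
with transition maps the projections. To verify it, I check the universal property. A compatible family of maps $Y \to \prod_{S_E} X_s$ determines, for every $s$, a well-defined map $Y \to X_s$: any two sets $S_E, S_{E'}$ containing $s$ both contain the singleton $\{s\}$'s chain set via $E'' = $ the vertex set of the chain, which is contained in both. Hence the family gives a unique map $Y \to \prod_S X_s$. This works for any collection of objects in an $\mathsf{AB3^*}$ category.

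Applying the fact with $S = \CN'_k(\Lambda)$, $S_E = \CN'_k(E)$ and $X_{\textbf{s}} = F(s_0)^j$ gives the bidegreewise isomorphism, and totalizing completes the proof. I expect no real obstacle. The only places needing care are the compatibility of the horizontal differential with restriction, which is where non-degenerate chains in the subposet $E$ matter, and the observation that the vertex set of a chain is itself an element of $\pfinmax(\Lambda)$ lying below every $E$ containing the chain.
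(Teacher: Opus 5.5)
Your proof is correct and is essentially the paper's argument. The transition maps are the same projections onto subproducts, and universality comes from the universal property of the product together with the fact that each chain $\textbf{s}$ lies in its vertex set $\{s_0,\dots,s_k\}\in\pfinmax(\Lambda)$, which also makes the induced component independent of the chosen $E$. The only difference is that you identify the limit bidegreewise on $B^{\prod}$ and then commute $\Tot^{\prod}$ with limits, whereas the paper checks the universal property directly on the totalized complexes.

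One small caution: your ``general fact'' is false for an arbitrary union over a poset. Over a two-element discrete poset with $S_a=S_b=\{s\}$ the limit is $X_s\times X_s$, not $X_s$. It should be stated with the hypothesis that any two indices whose sets contain $s$ lie above a common index whose set contains $s$, which is exactly what your vertex-set observation supplies.
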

	\begin{proof}
		Given $E, E' \in \pfinmax(\Lambda)$ such that $E' \subset E$, it is clear the reduced nerves satisfy $\CN'_\bullet(E') \subset \CN'_\bullet(E)$, so we can define the obvious morphism to a subproduct. That is, for any $k, j \in \ZZ$ and $\textbf{s} \in \CN'_k(E')$,
		\[
			\begin{tikzpicture}
				\matrix (m) [matrix of math nodes,
				row sep=3em, column sep=3em,
				text height=2ex,
				text depth=0.25ex]{
					B^{\prod}(F\vert_E)^{kj} = \prod_{ \textbf{t} \in \CN'_k(E)}F(s_0)^j &  \prod_{ \textbf{s} \in \CN'_k(E')}F(s_0)^j = B^{\prod}(F\vert_{E'})^{kj}   \\
					F(s_0)^j & F(s_0)^j.  \\
				};
				\path[->, font=\scriptsize]
				(m-1-1) edge node[auto] {$\zeta^{kj}$} (m-1-2)
				(m-1-1) edge node[left] {$p_{\textbf{s}}$} (m-2-1)
				(m-1-2) edge node[right] {$p_{\textbf{s}}$} (m-2-2)
				(m-2-1) edge node[auto] {$\id_{F(s_0)^j}$} (m-2-2);
			\end{tikzpicture}
		\]
		It obviously commutes with the vertical differentials, and it is a straightforward verification that it also commutes with the horizontal differentials, so this defines a double complex morphism. The category $\SA$ is in particular pointed, so the morphisms $\zeta^{kj}$ are in fact retractions. 
		
		After totalization by products, we obtain a morphism 
		\[
			\Tot^{\prod}\left(B^{\prod}(F\vert_E)\right) \lto \Tot^{\prod}\left(B^{\prod}(F\vert_{E'})\right),
		\]
		which again is a retraction at every degree $n \in \ZZ$, because a product of retractions is again a retraction. Therefore, we have defined the inverse system of the statement, $H \colon \pfinmax(\Lambda)^\op \to \CCC(\SA)$, which is given by
		\[
			E \mapsto \holim{}F\vert_E.
		\]
		
		Now, it is obvious that $\Tot^{\prod}(B^{\prod}(F))$, together with projections defined in an analogous way to the morphisms above, is a \textit{cone over the diagram $H$}\footnote{Not to be confused with the \textit{mapping cone} of a map of complexes, which we will just call \textit{cone} (see the proof of Theorem \ref{thm:quis-abelian-cat-ab4*-gen})}. We want to show it is universal. But that is easy---given a cone $C \in \CCC(\SA)$ over the diagram $H$, we want to show the existence of a unique dashed arrow making commutative the diagram
		\[
			\begin{tikzpicture}
				\matrix (m) [matrix of math nodes,
				row sep=1.6em, column sep=3em,
				text height=2ex,
				text depth=0.25ex]{
				C & & \Tot^{\prod}\left(B^{\prod}(F)\right)   \\
				 & \Tot^{\prod}\left(B^{\prod}(F\vert_E)\right) &  \\
				& \Tot^{\prod}\left(B^{\prod}(F\vert_{E'})\right). & \\
				};
				\path[->, font=\scriptsize]
				(m-1-3) edge node[auto] {} (m-2-2)
				(m-1-3) edge node[auto] {} (m-3-2)
				(m-2-2) edge node[auto] {} (m-3-2)
				(m-1-1) edge node[auto] {} (m-2-2)
				(m-1-1) edge node[auto] {} (m-3-2)
				(m-1-1) edge[dashed] node[above] {$\exists!$} (m-1-3);
			\end{tikzpicture}
		\]
		In each degree $n \in \ZZ$, an arrow $C^n \to \Tot^{\prod}\left(B^{\prod}(F)\right)^n$ is completely defined after postcomposition with the projections $p_j$ and $p_{\textbf{s}}$ such that $k+j=n$ (where $k$ is the length of $\textbf{s}$). In other words, it is completely determined by defining each morphism $C^n \to F(s_0)^j$, where $F(s_0)^j$ is indexed $j$ and a chain of morphisms $\textbf{s} \equiv s_0 \to \dots \to s_k$ and $k+j=n$. But for each pair $(j, \textbf{s})$, the chain $\textbf{s}$ has finite length, so there exists $E \in \pfinmax(\Lambda)$ with $\textbf{s} \in \CN'_k(E)$ (for instance, the set $\{s_0. \dots, s_k\} \subset \Lambda$ does the trick), and the arrow $C^n \to \Tot^{\prod}\left(B^{\prod}(F\vert_E)\right)^n$ determines the wanted morphism $C^n \to F(s_0)^j$. This is easily seen to be independent of the chosen $E \in \pfinmax(\Lambda)$ because of the commutativity of the above diagram, so we get a uniquely defined morphism $C \to \Tot^{\prod}\left(B^{\prod}(F)\right)$ and we are done.	
	\end{proof}

	\begin{lem}\label{lem: flabby-over-subsets-with-maximum}
		Let $\SA$ be a concrete, abelian category satisfying $\mathsf{AB3^*}$ and $\Lambda$ be a filtered poset. If an inverse system $F \colon \Lambda^\op \to \CCC(\SA)$ is degree-wise weakly flabby, so is the inverse system
		\[
			u^* F \colon \pfinmax(\Lambda)^\op \lto \CCC(\SA), \quad E \mapsto F \circ u(E) = F(\mu_E),
		\]
		where $\mu_E$ is the maximum element of $E \in \pfinmax(\Lambda)$.
	\end{lem}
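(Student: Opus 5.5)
Fix a degree $j \in \ZZ$. Limits in $\CCC(\SA)$ are computed degree-wise, so it suffices to prove the following. Let $U \subset |\pfinmax(\Lambda)|$ be a filtered open subset, that is, nonempty, closed under passing to smaller members of $\pfinmax(\Lambda)$, and upward directed under inclusion. Then the restriction map
\[
\varprojlim_{\pfinmax(\Lambda)} F(\mu_E)^j \lto \varprojlim_{E\in U} F(\mu_E)^j
\]
is surjective. The plan is to push $U$ down to a filtered open subset of $|\Lambda|$ via the functor $u$, and then invoke the weak flabbiness of $F^j$ on $\Lambda$.

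\textbf{Step 1: the set $M$.} Put $M \coloneqq \{\mu_E \mid E \in U\} \subset \Lambda$, and let $V$ be its downward closure. Then $V$ is open in $|\Lambda|$. We claim $M$ is directed. Given $E_1, E_2 \in U$, directedness of $U$ gives $E_3 \in U$ with $E_1 \cup E_2 \subset E_3$, hence $\mu_{E_1}, \mu_{E_2} \leq \mu_{E_3}$. It follows that $V$ is filtered and that $M$ is cofinal in $V$. Consequently
\[
\varprojlim_V F^j \cong \varprojlim_M F^j .
\]

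\textbf{Step 2: the key identification.} We show that the canonical map $\varprojlim_M F^j \to \varprojlim_{E\in U} F(\mu_E)^j$ is an isomorphism. This map sends $(y_s)_{s \in M}$ to $(y_{\mu_E})_{E \in U}$.

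For the inverse, take a compatible family $(x_E)_{E\in U}$ and set $y_s \coloneqq x_E$ for any $E\in U$ with $\mu_E = s$. This is well defined: if $\mu_E = \mu_{E'} = s$, choose $E''\in U$ containing $E\cup E'$. Compatibility along $E\subset E''$ and $E'\subset E''$ gives
\[
x_E = F(s\to\mu_{E''})^j x_{E''} = x_{E'} .
\]
The same argument, applied to $E, E'$ with $s = \mu_{E'} \leq t = \mu_E$, shows $F(s\to t)^j y_t = y_s$. So $(y_s)$ lies in $\varprojlim_M F^j$, and the two constructions are mutually inverse.

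This is the step I expect to need the most care. One must be sure that the filtered-ness of $U$, rather than mere openness, is exactly what makes the family well defined. In particular, for comparable $s \leq t$ in $M$ the witnesses $E$, $E'$ need not themselves be nested, so one has to pass through a common upper bound in $U$.

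\textbf{Step 3: compatibility with restrictions.} By (\ref{limit-finite-subsystems-u*}) we have $\varprojlim_{\pfinmax(\Lambda)} u^*F^j \cong \varprojlim_\Lambda F^j$. Under this isomorphism and the identifications of Steps 1 and 2, the restriction map of the first display becomes the restriction
\[
\varprojlim_\Lambda F^j \lto \varprojlim_V F^j .
\]
Checking this is a direct unwinding of the definitions, since all maps are projections of compatible families. Because $V$ is a filtered open subset of $|\Lambda|$ and $F^j$ is weakly flabby, this restriction is surjective. Hence $u^*F^j$ is weakly flabby for every $j$, which proves the lemma.
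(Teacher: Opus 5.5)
Your proof is correct and is essentially the paper's argument. The paper likewise passes from $U$ to $S_U=\bigcup_{E\in U}E$ and its downward closure $W_U$, identifies $\varprojlim u^*F\vert_U\cong\varprojlim F\vert_{S_U}\cong\varprojlim F\vert_{W_U}$, and concludes from a commutative square with the surjection $\varprojlim F\to\varprojlim F\vert_{W_U}$. Your $M$ in fact equals $S_U$: since $U$ is downward closed, $\{s\}\in U$ for every $s\in E\in U$. Your Step 2, which uses the directedness of $U$ exactly where it is needed, supplies the detail that the paper only asserts by analogy with the first equality in (\ref{limit-finite-subsystems-maximum}).
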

	\begin{proof}
		As in \ref{flabby-systems}, denote by $\widetilde{u^*F}$ the sheaf on $|\pfinmax(\Lambda)|$ associated to $u^*F$ and let $U \subset |\pfinmax(\lambda)|$ be a filtered open subset. 
		
		From $U$ we can obtain two subsets of $|\Lambda|$. First, $S_U \coloneqq \bigcup_{E \in U} E$, which need not be open in $|\Lambda|$; second, $W_U \coloneqq \bigcup_{E \in U}\left(\bigcup_{s \in E}|\Lambda|_{\leq s}\right)$ (see \ref{flabby-systems}), which is the smallest open subset of $|\Lambda|$ containing $S_U$. From the fact that $U$ is filtered it readily follows that so are $S_U$ and $W_U$. Now,
		\[
			\varprojlim u^*F\vert_U \cong \varprojlim F\vert_{S_U} \cong \varprojlim F\vert_{W_U}.
		\]
		The second isomorphism is obvious because $S_U$ is cofinal in $W_U$ by construction. The first follows in the same way as the first equality in (\ref{limit-finite-subsystems-maximum}).
		
		As a consequence, from the clearly commutative diagram
			\[
		\begin{tikzpicture}
			\matrix (m) [matrix of math nodes,
			row sep=1.6em, column sep=3em,
			text height=2ex,
			text depth=0.25ex]{
				\Gamma(|\pfinmax(\Lambda)|,\widetilde{u^*F}) = \varprojlim u^*F &  \Gamma(|\Lambda|,\tilde{F}) = \varprojlim F  \\
				\Gamma(U,\widetilde{u^*F}) = \varprojlim u^*F\vert_U & \Gamma(W_U,\tilde{F}) = \varprojlim F\vert_{W_U}  \\
			};
			\path[->, font=\scriptsize]
			(m-1-1) edge node[auto] {} (m-1-2)
			(m-1-1) edge node[auto] {} (m-2-1)
			(m-2-1) edge node[auto] {} (m-2-2)
			(m-1-2) edge node[auto] {} (m-2-2)
			;
		\end{tikzpicture}
		\]
		we see that because both horizontal maps are isomorphisms and the right vertical one is surjective (since $F$ is weakly flabby), the left one must also be surjective, which implies that $u^*F$ is weakly flabby.	
	\end{proof}
	
	\begin{cosa}
		\textbf{Recollection about $\R\varprojlim$.}
		Since it is going to be used repeatedly in the following results, we clarify that by $\R\varprojlim$ we mean the following: consider the category of left $R$-modules, $R\md$. For our purposes this is enough, but one could replace it by any other complete Grothendieck category $\SA$. The additive functor 
		\[
		\varprojlim \colon \cat(\pfinmax(\Lambda)^\op, R\md) \to R\md,
		\]
		can be extended (degree-wise) to the homotopy category. We keep denoting the composite
		\[
		\begin{tikzpicture}
			\matrix (m) [matrix of math nodes,
			row sep=1.6em, column sep=3em,
			text height=2ex,
			text depth=0.25ex]{
				\K(\cat(\pfinmax(\Lambda)^\op, R\md)) &  \K(R) \\
				&  \D(R) \\
			};
			\path[->, font=\scriptsize]
			(m-1-1) edge node[auto] {$\K(\varprojlim)$} (m-1-2)
			(m-1-2) edge node[auto] {$Q$} (m-2-2)
			(m-1-1) edge node[below] {$\varprojlim$} (m-2-2)
			;
		\end{tikzpicture}
		\]
		by $\varprojlim$, as shown, where $Q$ is the Verdier quotient. Then, $\R\varprojlim$ is just the right derived functor of $\varprojlim$
		\[
			\R\varprojlim \colon \D(\cat(\pfinmax(\Lambda)^\op, R\md))  \lto \D(R),
		\]
		which is triangulated and exists because $\cat(\pfinmax(\Lambda)^\op, R\md)$ is a Grothendieck category and so $K$-injective resolutions exist \cite[Theorem 5.4]{AJS}.
	\end{cosa}
	
	\begin{rem}
		The following results need some conditions on the cardinality of the inverse system in order to guarantee the possibility of using Goblot's theorem\footnote{Let $R$ be an associative unitary (possibly non-commutative) ring, $n \in \NN$ and $\Lambda$ a filtered poset of cardinality $\#\Lambda \leq \aleph_n$. If $F \colon \Lambda^\op \to R\md$ is any inverse system, then $\sideset{}{^{i}}\varprojlim F = 0$ for any $i \geq n+2$.} \cite{goblot}. We recall the notation $\aleph_\omega$ for the smallest cardinal bigger than all of $\aleph_n$ for $n \in \NN$. We will use it as a convenient notation, but no further cardinal theory will be used beyond what is already implicit in Goblot's statement.
	\end{rem}
	
	\begin{thm}\label{thm:quis-abelian-cat-ab4*-gen}
		Suppose $R\md$ is the category of left $R$-modules for a ring $R$, and let $\Lambda$ be a filtered poset of cardinality \,$\#\Lambda < \aleph_\omega$. If $F \colon \Lambda^\op \to \CCC(R)$ is an inverse system which is weakly flabby (degree-wise), there exists a quasi-isomorphism 
		\[ \varprojlim F \to \holim{} F.\]
	\end{thm}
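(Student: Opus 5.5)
The strategy is to pass through the finite subsystems with maximum, using Lemma \ref{lemma:holim-as-lim} together with the isomorphism (\ref{limit-finite-subsystems-u*}). First construct the comparison map. For each $E \in \pfinmax(\Lambda)$, Proposition \ref{prop:homotopy-equiv-finite-system} gives a homotopy equivalence
\[
\psi_E \colon F(\mu_E) \lto \holim{} F\vert_E ,
\]
obtained by totalizing the morphism of double complexes (\ref{eq: map2-finite-poset}). The maps $\psi_E$ are visibly compatible with the retractions $\zeta$ of Lemma \ref{lemma:holim-as-lim} and with the transition maps of $u^*F$: both send an element to the family $(F(s\to\mu_E)(x))_{s}$. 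Hence they assemble into a morphism of inverse systems $\psi \colon u^*F \to H$ in $\cat(\pfinmax(\Lambda)^\op,\CCC(R))$. Passing to the limit and using (\ref{limit-finite-subsystems-u*}) and Lemma \ref{lemma:holim-as-lim}, we get a morphism of complexes
\[
\varprojlim F \cong \varprojlim u^*F \lto \varprojlim H \cong \holim{}F .
\]
This is the candidate quasi-isomorphism, and it is the totalization of the obvious map placing $\varprojlim F$ in horizontal degree $0$.

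To show it is a quasi-isomorphism, note that $\psi$ is a degreewise quasi-isomorphism of systems, indeed objectwise a homotopy equivalence. It therefore induces an isomorphism $\R\varprojlim u^*F \cong \R\varprojlim H$ in $\D(R)$. It then suffices to prove two things: that $\varprojlim u^*F \to \R\varprojlim u^*F$ and $\varprojlim H \to \R\varprojlim H$ are isomorphisms, i.e.\ that both systems are $\varprojlim$-acyclic, and that the resulting square commutes. The square commutes by naturality of the comparison $\varprojlim \to \R\varprojlim$.

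The acyclicity is where the hypotheses enter. For $u^*F$, Lemma \ref{lem: flabby-over-subsets-with-maximum} shows it is degreewise weakly flabby on the filtered poset $\pfinmax(\Lambda)$, so Theorem \ref{thm: jensen-weakly-flabby} gives $\sideset{}{^{i}}\varprojlim (u^*F)^j = 0$ for $i\ge 1$. For $H$, the transition maps are split epimorphisms onto subproducts. I would check directly that each degree $H^n$ is weakly flabby, and in fact flabby. A compatible family over an open $U$ is a family of elements indexed by chains lying in $\bigcup_{E\in U}E$, and it extends by zero on the remaining chains to a global section. Hence $H^n$ also has vanishing higher limits.

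The main obstacle is passing from degreewise acyclicity to the statement that $\varprojlim$ computes $\R\varprojlim$ on unbounded complexes of systems. This is where the cardinality bound is used. Since $\#\pfinmax(\Lambda) = \#\Lambda < \aleph_\omega$ for infinite $\Lambda$ (finite $\Lambda$ has a maximum and is covered by Proposition \ref{prop:homotopy-equiv-finite-system}), Goblot's theorem gives $\varprojlim$ finite cohomological dimension $\le n+1$. Then a standard way-out argument applies: use the triangles relating a complex to its brutal truncations, plus a Milnor/homotopy-limit argument over the truncations. Alternatively, use that a functor of finite cohomological dimension can be derived on unbounded complexes via degreewise acyclic resolutions. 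Either route shows that a complex of degreewise $\varprojlim$-acyclic systems computes $\R\varprojlim$, and this completes the proof.
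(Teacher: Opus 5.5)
Your proof is correct and follows essentially the paper's route: reduce to $\pfinmax(\Lambda)$ via Lemma~\ref{lemma:holim-as-lim} and (\ref{limit-finite-subsystems-u*}), use weak flabbiness of $u^*F$ (Lemma~\ref{lem: flabby-over-subsets-with-maximum}) and flabbiness of $H$, and then combine Goblot's bound with the finite-cohomological-dimension criterion to identify $\varprojlim$ with $\R\varprojlim$ on these unbounded complexes. The one difference is that you compare $\varprojlim\to\R\varprojlim$ directly on $u^*F$ and $H$ through the maps $\psi_E$, whereas the paper forms the system of cones of the projections $\varphi_E$ and proves it weakly flabby with the weak four lemma; your version is cleaner and in fact more correct, because the $\psi_E$ are compatible with the transition maps while the $\varphi_E$ are not (for $E'\subset E$ with $\mu_{E'}<\mu_E$, $\varphi_{E'}\zeta$ picks the $\mu_{E'}$-component, but $F(\mu_{E'}\to\mu_E)\varphi_E$ uses the $\mu_E$-component).
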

	\begin{proof}
		First, from (\ref{limit-finite-subsystems-maximum}) we learn that $\varprojlim F \cong \varprojlim_{E \in \pfinmax(\Lambda)}F(\mu_E)$, where $\mu_E$ is the maximum of $E$ (equivalently, $\varprojlim = \varprojlim \circ u^*$ is an isomorphism of functors\footnote{Recall that this meant $\varprojlim_{\Lambda} = \varprojlim_{\pfinmax(\Lambda)} \circ u^*$ as functors $\Lambda^\op \to R\md$.}, where $u^*$ is defined as in (\ref{limit-finite-subsystems-u*})). On the other hand, Lemma \ref{lemma:holim-as-lim} yields
		\[
			\holim{} F \cong \varprojlim_{E \in \pfinmax(\Lambda)} \!\!\! \holim{}F\vert_E.
		\]
	
		For each $E \in \pfinmax(\Lambda)$, let us denote by 
		\[
			\varphi_E \colon \!\!\!\holim{} F\vert_E \to F(\mu_E)
		\]
		the morphism in $\CCC(R)$ constructed in the proof of Proposition \ref{prop:homotopy-equiv-finite-system}, which was shown to be a homotopy equivalence, and by $\cone(\varphi_E)$ the cone of $\varphi_E$. Thus, $\cone(\varphi_E)$ is exact for each $E \in \pfinmax(\Lambda)$ and we have semi-split exact sequences in $\CCC(R)$
		\begin{equation}\label{short-exact-sequences}
				0 \lto F(\mu_E) \lto \cone(\varphi_E) \lto \!\!\!\holim{} F\vert_E [1] \lto 0.
		\end{equation}
		Denote by $u^*F$, $C$ and $H[1]$ the respective inverse systems $\pfinmax(\Lambda)^\op \to \CCC(R)$, \textit{i.e.}, $u^*F$ is the one given by $E \mapsto F(\mu_E)$, $C$ the one defined by $E \mapsto \cone(\varphi_E)$, and $H[1]$ the one given by $E \mapsto \!\!\!\holim{} F\vert_E [1]$. From Lemma \ref{lem: flabby-over-subsets-with-maximum} we learn that $u^*F$ is degree-wise weakly flabby. Moreover, one can check that the system $H[1]$ is in fact flabby (degree-wise) with no hypotheses on $F$, thus weakly flabby. These two facts and the exact sequences (\ref{short-exact-sequences}) let us deduce that the system $C$ has the same property. Indeed, we know that weakly flabby inverse systems in $R\md$ have vanishing higher derived limits by Theorem \ref{thm: jensen-weakly-flabby}. Also, if we let $U$ be any filtered open susbet of $\pfinmax(\Lambda)$, note that since $u^*F$ and $H[1]$ are weakly flabby, so are their restrictions $u^*F\vert_U$ and $H[1]\vert_U$. Therefore, taking limits over the objects $E \in U$ in the exact sequences (\ref{short-exact-sequences}) we obtain the short exact sequence
		\[
			0 \lto \varprojlim u^*F\vert_U \lto \varprojlim C\vert_U \lto \varprojlim H[1]\vert_U \lto 0,
		\]
		because in each degree $n \in \ZZ$ the long exact sequence yields, by weak flabbinness of $u^*F\vert_U$,
		\[
			\sideset{}{^{1}}\varprojlim (u^*F\vert_U)^n = \sideset{}{^{1}}\varprojlim (u^*F)^n\vert_U = 0.
		\]
		As a consequence, we get the diagram, for each filtered open subset $U \subset \pfinmax(\Lambda)$
		\[
		\begin{tikzpicture}
			\matrix (m) [matrix of math nodes,
			row sep=1.6em, column sep=3em,
			text height=2ex,
			text depth=0.25ex]{
				0 & \varprojlim u^*F & \varprojlim C & \varprojlim H[1] & 0\\
				0 & \varprojlim u^*F\vert_U & \varprojlim C\vert_U & \varprojlim H[1]\vert_U & 0,\\
			};
			\path[->, font=\scriptsize]
			(m-1-1) edge node[auto] {} (m-1-2)
			(m-1-2) edge node[auto] {} (m-1-3)
			(m-1-3) edge node[auto] {} (m-1-4)
			(m-1-4) edge node[auto] {} (m-1-5)
			(m-2-1) edge node[auto] {} (m-2-2)
			(m-2-2) edge node[auto] {} (m-2-3)
			(m-2-3) edge node[auto] {} (m-2-4)
			(m-2-4) edge node[auto] {} (m-2-5)
			(m-1-2) edge node[auto] {} (m-2-2)
			(m-1-3) edge node[auto] {} (m-2-3)
			(m-1-4) edge node[auto] {} (m-2-4);
		\end{tikzpicture}
		\]
		and so the left and right vertical morphisms being surjections implies that so is the middle vertical one by the weak four lemma \cite[Chapter XII, Lemma 3.1.]{M}. That is, $C$ is weakly flabby.
		
		The short exact sequence for $U = \pfinmax(\Lambda)$ and (\ref{limit-finite-subsystems-u*}) give the existence of a distinguished triangle
		\begin{equation}\label{dist-triangle-holim-lim}
			\varprojlim_{E \in \pfinmax(\Lambda)}\!\!\!\holim{} F\vert_E \overset{\varphi}{\lto} \varprojlim F \lto \varprojlim_{E \in \pfinmax(\Lambda)}\cone(\varphi_E) \overset{+}{\lto}
		\end{equation}	 
		in $\D(R)$, where $\varphi = \varprojlim \varphi_E$, and it follows that $\cone(\varphi) \cong \varprojlim_{E \in \pfinmax(\Lambda)}\cone(\varphi_E)$ in $\D(R)$. Then, it is enough to show that $\varprojlim_{E \in \pfinmax(\Lambda)}\cone(\varphi_E)$ is exact. To do that, the first step is to note that, in the same way, the short exact sequences (\ref{short-exact-sequences}) give rise to distinguished triangles
		\begin{equation}\label{dist-triangles-each-finite-subset}
			\holim{} F\vert_E \overset{\varphi_E}{\lto} F(\mu_E) \lto \cone(\varphi_E) \overset{+}{\lto} 
		\end{equation}
		in $\D(R)$ in which the cone is isomorphic to zero, that is, the first arrow is an isomorphism. We claim that the triangle (\ref{dist-triangle-holim-lim}) is actually the image through $\R\varprojlim$ (a functor that preserves quasi-isomorphisms because it is triangulated) of the triangles (\ref{dist-triangles-each-finite-subset}), which automatically lets us conclude. 
		
		Indeed, the point is that since $\#\Lambda \leq \aleph_n$ for some $n \in \NN$, then $\# \pfinmax(\Lambda) \leq \aleph_n$ as well, and so by Goblot's result \cite{goblot} (one can also consult \cite[Théorème 3.1.]{jensen}) we have that for \emph{any} functor $G \colon \pfinmax(\Lambda)^\op \to R\md$, then $\sideset{}{^{i}}\varprojlim G = 0$ for all $i \geq n+2$. In other words, $\h^i(\R\varprojlim G) = 0$ for all $i \geq n+2$. Therefore, we have one of the equivalent conditions of (the dual of) \cite[Proposition 2.7.5.]{yellow}---which we can apply because the functor $\varprojlim \colon \cat(M(\Gamma)^\op, R\md) \to R\md$ is left-exact and every object in $\cat(\pfinmax(\Lambda)^\op, R\md)$ can be embedded into a $\varprojlim$-right-acyclic one (for instance, we can consider flabby inverse systems)---whence we get the conclusions of \emph{loc.cit}. In particular, a complex of $\varprojlim$-right-acyclic objects is a $\varprojlim$-right-acyclic complex. 
		
		Since weakly flabby inverse systems are right-acyclic for $\varprojlim$ (for example, they satisfy the conditions (\romannumeral 1)-(\romannumeral 3) on \cite[Proposition 2.7.2.]{yellow}, as shown in \cite[\S 1]{jensen}), the objects $u^*F$, $C$ and $H[1]$ are degree-wise right-acyclic, thus right-acyclic for $\varprojlim$. 
		
		Then, the canonical arrow $\varprojlim u^*F \to \R\varprojlim u^*F$ is an isomorphism in $\D(R)$, and the analogous conclusion holds for $C$ and $H[1]$, which amounts to saying that the distinguished triangle (\ref{dist-triangle-holim-lim}) is isomorphic to the distinguished triangle
		\begin{equation}\label{rlim-triangle}
			\R\varprojlim H \lto \R\varprojlim u^*F \lto \R\varprojlim C \overset{+}{\lto},
		\end{equation}
		so $\cone(\varprojlim \varphi_E)$ is exact (because $\R\varprojlim C$ is exact) and we are done.
	\end{proof}

	\begin{rem}
		Sequences that satisfy degree-wise the strong Mittag-Leffler condition introduced in \ref{flabby-systems} are examples of inverse systems for which Theorem \ref{thm:quis-abelian-cat-ab4*-gen} can be applied, as long as the sequence is of length $\gamma$, for $\gamma$ an ordinal whose cardinal is strictly smaller than $\aleph_\omega$. After all, those inverse systems in $R\md$ are flabby by Lemma \ref{lem: sequences-flabby-neeman}. 
		
		Also note that in the case of sequences weak flabbiness is equivalent to flabiness, since every open subset must be filtered.
	\end{rem}
	
	We can say even more about the homotopy limit in the category $R\md$, making the same assumptions regarding the cardinality of $\Lambda$:
	
	\begin{thm}\label{thm:in-rmod-holim-is-rlim}
		Suppose $R\md$ is the category of left $R$-modules for a ring $R$, and let $\Lambda$ be a filtered poset of cardinality \,$\#\Lambda < \aleph_\omega$. Let $F \colon \Lambda^\op \to \CCC(R)$ be \emph{any} inverse system. Then, there exists a natural isomorphism in $\D(R)$ 
		\[
			\holim{} F \cong \R\varprojlim F.
		\]
	\end{thm}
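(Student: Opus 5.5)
Reduce to Theorem \ref{thm:quis-abelian-cat-ab4*-gen} by replacing $F$ with a degree-wise weakly flabby resolution. The resolution must be functorial, and it must behave well with respect to both $\holim{}$ and $\R\varprojlim$.

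\textbf{Step 1: a flabby resolution.} The category $\cat(\Lambda^\op, R\md)$ is Grothendieck, so I would choose a quasi-isomorphism $F \to I$ in $\CCC(\cat(\Lambda^\op,R\md))$ with $I$ a K-injective complex of injective inverse systems. Any functorial K-injective resolution works, for instance the one from \cite[Theorem 5.4]{AJS}. An injective object of $\cat(\Lambda^\op,R\md)$ is flabby, because injective sheaves on $|\Lambda|$ are flabby; hence it is weakly flabby. So $I$ is degree-wise weakly flabby. Since $I$ is K-injective, $\R\varprojlim F \cong \varprojlim I$ in $\D(R)$.

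\textbf{Step 2: $\holim{}$ preserves quasi-isomorphisms.} I need $\holim{} F \to \holim{} I$ to be a quasi-isomorphism. Set $G = \cone(F\to I)$, which is a degree-wise exact inverse system, i.e.\ an acyclic complex of inverse systems. Because $\holim{}$ commutes with cones, it suffices to show that $\holim{} G$ is acyclic. I would compute its cohomology by filtering the double complex $B^{\prod}(G)$ by the horizontal (nerve) degree, which is bounded below at $0$.

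With products exact ($\mathsf{AB4^*}$ in $R\md$), taking vertical cohomology first gives $E_1^{kj} = \prod_{\textbf{s}\in\CN'_k}\h^j(G(s_0)) = 0$. The obstacle is convergence: the total complex is a product totalization and the filtration is unbounded in the vertical direction. This is exactly where Goblot's bound enters. The $E_2$-page is $\varprojlim^k \h^j$, and its rows have finite length $\le n+2$ by Goblot, since $\#\Lambda\le\aleph_n$. I expect this convergence to be the main technical point.

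A cleaner route I would try first avoids spectral sequences altogether. Lemma \ref{lemma:holim-as-lim} gives $\holim{} G = \varprojlim_{E} \holim{} G|_E$, where each $\holim{} G|_E \simeq G(\mu_E)$ by Proposition \ref{prop:homotopy-equiv-finite-system}. The transition maps are degree-wise split surjections, so the system $E\mapsto \holim{} G|_E$ is degree-wise flabby; the proof of Theorem \ref{thm:quis-abelian-cat-ab4*-gen} already asserts this. Hence
\[\holim{} G = \varprojlim_E \holim{} G|_E \cong \R\varprojlim_E \holim{} G|_E,\]
using the Goblot finiteness and \cite[Proposition 2.7.5]{yellow}, exactly as in that proof. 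The system $E\mapsto \holim{} G|_E$ is degree-wise quasi-isomorphic, as a complex of systems, to $u^*G$ via the maps $\varphi_E$. Since $u^*G$ is acyclic, so is $\R\varprojlim u^*G$, and therefore $\holim{} G$ is acyclic.

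\textbf{Step 3: conclusion and naturality.} Theorem \ref{thm:quis-abelian-cat-ab4*-gen} applied to $I$ gives a quasi-isomorphism $\varprojlim I \to \holim{} I$. Combining,
\[\R\varprojlim F\cong \varprojlim I \cong \holim{} I \cong \holim{} F\]
in $\D(R)$. Naturality follows because the resolution $F\to I$ is functorial up to homotopy and every map used is natural in $I$: $\varprojlim I\to\holim{} I$ is the limit of the natural maps $\psi_E$ (equivalently the inverse of $\varprojlim\varphi_E$ in $\D(R)$). This gives a natural isomorphism of functors on $\D(\cat(\Lambda^\op,R\md))$.
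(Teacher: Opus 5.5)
Your proof is correct, but it is arranged differently from the paper's.

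\textbf{The paper's route.} The paper does not replace $F$ inside $\holim{}$. It shows $\holim{} F\cong\varprojlim H\cong\R\varprojlim H$ for $H\colon E\mapsto\holim{} F|_E$, using degree-wise flabbiness of $H$, Goblot, and the dual of \cite[Proposition 2.7.5]{yellow}. It then identifies $\R\varprojlim H$ with $\R\varprojlim u^*F$ via the pointwise acyclic system of cones. Finally it proves the derived identity $\R\varprojlim_\Lambda\cong\R\varprojlim_{\pfinmax(\Lambda)}\circ u^*$, by checking that $u^*$ of a degree-wise injective K-injective complex is degree-wise weakly flabby.

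\textbf{Your route.} You first show that $\holim{}$ inverts quasi-isomorphisms. You then reduce, through a K-injective complex of injectives, to Theorem \ref{thm:quis-abelian-cat-ab4*-gen}, whose proof already contains that comparison along $u$. The ingredients are the same. Your arrangement makes naturality transparent: the natural map $\varprojlim\to\holim{}$ given by the $\psi_E$ induces $\R\varprojlim\to\holim{}$ by the universal property of the derived functor, and Theorem \ref{thm:quis-abelian-cat-ab4*-gen} says it is invertible on K-injectives. The paper's arrangement instead yields the identity along $u$ as a by-product. You must insist that the resolution be degree-wise injective, as the paper does via \cite[Application 2.4]{BN}. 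An arbitrary K-injective complex, for instance a contractible one, need not have flabby components, so ``any functorial K-injective resolution'' is not enough.

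\textbf{Two corrections to Step 2.} First, the $\varphi_E$ are not compatible with the restriction maps. Take $E'\subset E$ with $\mu_{E'}<\mu_E$. Then $\varphi_{E'}$ composed with the restriction is the projection onto the factor $G(\mu_{E'})$ in horizontal degree $0$. By contrast, $G(\mu_{E'}\le\mu_E)\circ\varphi_E$ factors through the factor $G(\mu_E)$, and the two maps agree only up to homotopy. The $\psi_E$ are compatible, but you need no comparison map at all. Each $\holim{} G|_E\simeq G(\mu_E)$ is acyclic, so the system $E\mapsto\holim{} G|_E$ is already zero in the derived category of systems.

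Second, Goblot is not needed in Step 2, and your spectral-sequence paragraph misplaces the difficulty: filtering by nerve degree, $E_1$ already vanishes. More directly, Lemma \ref{lemma:totalization-products-filtration} gives $\holim{} G=\varprojlim_n P_n$. Each $P_n$ is a finite extension of the acyclic columns $\prod_{\textbf{s}}G(s_0)$, and the transition maps are degree-wise split surjections. Milnor's sequence in $R\md$ then makes $\holim{} G$ acyclic, with no cardinality bound on $\Lambda$. So the hypothesis $\#\Lambda<\aleph_\omega$ enters your argument only through Theorem \ref{thm:quis-abelian-cat-ab4*-gen}.
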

	
	\begin{rem}
		Observe that, in the conditions on which it is stated, Theorem \ref{thm:in-rmod-holim-is-rlim} tells us that
		\[
			\holim{} \!\!\!\cong \R\varprojlim \colon \D(\cat(\Gamma^\op, R\md)) \lto \D(R),
		\]
		that is, the homotopy limit is actually a triangulated functor between the appropriate derived categories.
	\end{rem}
	\begin{proof}
		This proof is based on that of Theorem \ref{thm:quis-abelian-cat-ab4*-gen}, and so the assertion will follow from the following additional two facts. The first one, which has already been noted in the proof of Theorem \ref{thm:quis-abelian-cat-ab4*-gen}, states that the inverse system 
		\[
			H \colon \pfinmax(\Lambda)^\op \lto \CCC(R\md), \quad E \mapsto \!\!\!\holim{} F\vert_E
		\]
		is always flabby (degree-wise) and so, reasoning in exactly the same way (noting that $\#\pfinmax(\Lambda) < \aleph_n$ for the same $n$ as $\#\Lambda$ and using the dual of \cite[Proposition 2.7.5.]{yellow} and Lemma \ref{lemma:holim-as-lim}), we find that the system $H$ is $\varprojlim$-right-acyclic, whence the canonical arrow
		\begin{equation}\label{holim-iso-rlimH}
			\holim{} F \cong \varprojlim H \lto \R\varprojlim H
		\end{equation}
		is an isomorphism in $\D(R)$.
		
		The second fact is that the derived version of (\ref{limit-finite-subsystems-u*}) holds, that is,
		\[
			\R\varprojlim_{\Lambda} \cong \R\varprojlim_{\pfinmax(\Lambda)} \circ u^*.
		\]
		Indeed, first note that since $\cat(\Lambda^\op, R\md)$ and $\cat(\pfinmax(\Lambda)^\op,R\md)$ are Grothendieck categories, the right derived functors of $\varprojlim_{\pfinmax(\Lambda)}$ and $u^*$ exist \cite[Theorem 5.4.]{AJS}. In fact, since $u^*$ is obviously exact, $\R u^*=u^*$. So, by \cite[Corollary 2.2.7.]{yellow}, the assertion comes down to showing that for any complex $M \in \K(\cat(\Lambda^\op, R\md))$ we can find a quasi-isomorphism $M \to I_M$ so that $I_M$ is $u^*$-right-acyclic and $u^*I_M$ is $\varprojlim$-right-acyclic. 
		
		In order to do that, observe that since $R\md$ satisfies $\mathsf{AB4^*}$ and has enough injectives, so does $\cat(\Lambda^\op, R\md)$ (for the claim regarding injectives, see for instance \cite[Lemma A.4.3.]{Ntc}), thus we can choose $I_M$ to be a $K$-injective resolution with the property that it is also injective in every degree (see \cite[Application 2.4.]{BN}). Now, the exact isomorphism of categories between $\cat(\Lambda^\op,R\md)$ and sheaves of $R$-modules on the topological space $|\Lambda|$ of \ref{flabby-systems} implies that, in each degree $n \in \ZZ$, the associated sheaf $\widetilde{I_M^n}$ is injective. Injective sheaves are flabby, so from Lemma \ref{lem: flabby-over-subsets-with-maximum} we learn that $u^*I_M$ is degree-wise weakly flabby. Again, owing to the fact that $\#\pfinmax(\Lambda) < \aleph_n$ and arguing as in the proof of Theorem \ref{thm:quis-abelian-cat-ab4*-gen}, we obtain from the dual of \cite[Proposition 2.7.5.]{yellow} that $u^*I_M$ is $\varprojlim$-right-acyclic, as desired.
		
		Finally, recall the distinguished triangle (\ref{rlim-triangle})
		\[
		\R\varprojlim H \lto \R\varprojlim u^*F \lto \R\varprojlim C \overset{+}{\lto}
		\]
		obtained in the proof of Theorem \ref{thm:quis-abelian-cat-ab4*-gen}. As remarked there, $\R\varprojlim C = 0$ in $\D(R)$, so the first map is an isomorphism. This combines with the consequences from the two facts from above to prove the claim. The first fact yielded the isomorphism (\ref{holim-iso-rlimH}) in $\D(R)$ between the homotopy limit and the first term of the triangle, and the second one tells us that the second term of the triangle is isomorphic to $\R\varprojlim F$, which lets us establish the claimed isomorphism.
	\end{proof}
	
	\section{Duality between homotopy limits and colimits}\label{Section:duality}
	The definition of homotopy limits was clearly dual from the definition of homotopy colimits \cite[\S 2]{AJS}. The goal of this short section is exploring the way this duality relates both constructions and the consequences we can obtain when combining said relationship with the main results of Section \ref{Section:homotopy-limit}.
	
	\begin{thm}\label{hom-and-hocolim-holim}
		Let $\SA$ be an abelian category satisfying $\mathsf{AB3}$ and $\Lambda$ a filtered poset. Then, for any direct system $G \colon \Lambda \to \CCC(\SA)$, it holds that for any $X \in \CCC(\SA)$
		\[
			\Hom^\bullet(\!\!\!\hocolim{}G, X) \cong \!\!\!\holim{} \Hom^\bullet(G(-), X).
		\]
	\end{thm}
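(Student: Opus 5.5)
I will prove the isomorphism by unwinding both sides into products of $\Hom$-groups indexed by the same data and then checking that the differentials agree. I take $\Hom^\bullet(-,X)$ with values in $\CCC(\ab)$, so that the right-hand side is a homotopy limit in $\CCC(\ab)$. Recall from \cite[\S 2]{AJS} that $\hocolim{} G = \Tot^{\coprod}(B^{\coprod}(G))$. Here $B^{\coprod}(G)^{-k,j} = \coprod_{\textbf{s}\in\CN'_k(\Lambda)} G(s_0)^j$ sits in horizontal degree $-k\le 0$. Its horizontal differential is the alternating sum of face maps; the face deleting $s_0$ is $G(s_0\to s_1)$ and the others are identities. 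This is exactly the formula dual to the one defining $B^{\prod}$.

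The first step is to compute the left-hand side as a product. For $M = \Tot^{\coprod}(B^{\coprod}(G))$ we have $M^{-m} = \coprod_{k+j=-m}\coprod_{\textbf{s}} G(s_0)^{j}$. Since $\Hom(-,X^{l})$ turns coproducts into products,
\[
\Hom^n(M,X)=\prod_{m}\Hom(M^{m},X^{m+n})\cong \prod_{m}\,\prod_{k\ge0}\,\prod_{\textbf{s}\in\CN'_k(\Lambda)}\Hom\bigl(G(s_0)^{m+k},X^{m+n}\bigr).
\]
Here I have rewritten the $m$-th factor using $j=m+k$, where $k$ is the length of the chain $\textbf{s}$.

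The second step is to compute the right-hand side. Put $F=\Hom^\bullet(G(-),X)\colon\Lambda^\op\to\CCC(\ab)$. Then
\[
(\holim{}F)^n=\prod_{k\ge0}\,\prod_{\textbf{s}\in\CN'_k(\Lambda)}\Hom^{n-k}(G(s_0),X)=\prod_{k}\,\prod_{\textbf{s}}\,\prod_{i}\Hom\bigl(G(s_0)^i,X^{i+n-k}\bigr).
\]
Reindexing by $i=m+k$ identifies the two sides termwise, componentwise in $(k,\textbf{s},m)$. Thus there is a canonical isomorphism of graded objects. It is natural in $G$ and $X$ because it is assembled only from universal properties of products and coproducts.

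The main obstacle is to check that this identification commutes with the differentials, possibly after inserting signs $\epsilon(k,\textbf{s},m)=\pm1$ on the components. I would split each differential into a "horizontal" (nerve) part and a "vertical" (internal) part. On the left, the differential of $\Hom^\bullet(M,X)$ is $f\mapsto f\circ d_M+(-1)^{n+1}d_X\circ f$ by \ref{convention-internal-hom}. In turn, $d_M$ splits into $d_h$, the face maps of $B^{\coprod}$, and $(-1)^{-k}d_{G(s_0)}$, by (\ref{differential-totalization}). On the right, (\ref{differential-totalization-products}) gives $p_{\textbf{s}}d=d_h p+(-1)^{k}d_{F(s_0)}p$, where $d_{F(s_0)}$ is itself the $\Hom^\bullet$ differential. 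Also, $F(s_0\to s_1)$ is precomposition with $G(s_0\to s_1)$. Matching terms:
\begin{itemize}
\item The face-map terms agree up to the same sign $(-1)^i$ on both sides.
\item The terms $f\circ d_G$ and $d_X\circ f$ carry the signs $(-1)^k$ and $(-1)^{n+1}$ on the left, against $(-1)^k$ and $(-1)^{k}(-1)^{n-k+1}$ on the right.
\end{itemize}
These agree, so I expect the naive identification to work with $\epsilon\equiv1$. If a discrepancy appears, I would instead take $\epsilon=(-1)^{km}$ or $(-1)^{k(k+1)/2}$ and recheck it by the same bookkeeping. This sign verification, which is where the convention of \ref{convention-internal-hom} was designed to help, is the only real content; the rest is formal.
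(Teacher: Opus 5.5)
Your proposal is correct and follows essentially the same route as the paper. Both identify each side degreewise with $\prod_{m}\prod_{k\ge 0}\prod_{\textbf{s}}\Hom(G(s_0)^{m+k},X^{m+n})$, using the universal property of coproducts and the commutation of products. Both then compare the differentials componentwise: the face terms match with signs $(-1)^i$, and the internal terms match with signs $(-1)^k$ and $(-1)^{n+1}$, so your contingency signs $\epsilon$ are indeed unnecessary.
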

	\begin{proof}
		
		At the level of objects, the isomorphism is clear. Simplifying notation and only writing $\textbf{s}$ where it should be $\textbf{s} \in \CN'_k(\Gamma)$, for any $n \in \ZZ$ we have
		\begin{equation*}
			\begin{aligned}
				&\Hom^n(\!\!\!\hocolim{}G, X) = \prod_{j \in \ZZ} \Hom((\!\!\!\hocolim{}G)^j, X^{j+n}) \\
				&= \prod_{j \in \ZZ} \Hom\left(\coprod_{k \geq 0}B^{\coprod}(G)^{-k \,\, j+k}, X^{j+n}\right) \cong \prod_{j \in \ZZ} \prod_{k \geq 0} \Hom\left(B^{\coprod}(G)^{-k \,\, j+k}, X^{j+n}\right) = \\
				&=\prod_{j \in \ZZ} \prod_{k \geq 0} \Hom\left(\coprod_{\textbf{s}}G(s_0)^{j+k}, X^{j+n}\right) \cong \prod_{j \in \ZZ} \prod_{k \geq 0} \prod_{\textbf{s}}	\Hom\left(G(s_0)^{j+k}, X^{j+n}\right)	\cong \\
				&\cong \prod_{k \geq 0} \prod_{\textbf{s}} \prod_{j \in \ZZ} \Hom\left(G(s_0)^{j+k}, X^{j+n}\right) = \prod_{k \geq 0} \prod_{\textbf{s}} \Hom^{n-k}(G(s_0), X) = \\
				&= \prod_{k \geq 0} B^{\prod}(\Hom^\bullet(G(-), X))^{k \, \, n-k} = \left(\!\!\!\holim{} \Hom^\bullet(G(-),X)\right)^n.
			\end{aligned}
		\end{equation*}
		Note that we have only made use of definitions, the universal property of the coproduct and the fact that products commute with each other, because all limits do.
		
		As for the differential, our goal is to check that
		\[
			d_{\Hom^\bullet}^n \colon \prod_{j \in \ZZ} \Hom\left(\coprod_{k \geq 0}B^{\coprod}(G)^{-k \,\, j+k}, X^{j+n}\right) \lto \prod_{j \in \ZZ} \Hom\left(\coprod_{k \geq 0}B^{\coprod}(G)^{-k \,\, j+k}, X^{j+n+1}\right)
		\]
		and
		\[
			d_{\text{holim}}^n \colon \prod_{k \geq 0} B^{\prod}\left(\Hom^\bullet(G(-), X)\right)^{k \,\, n-k} \lto \prod_{k \geq 0} B^{\prod}\left(\Hom^\bullet(G(-), X)\right)^{k \,\, n-k+1}
		\]
		are the same. In order to do it, we will check that, after applying the suitable isomorphisms, as above, so that the objects involved in both differentials are the same (namely, $\prod_{j \in \ZZ} \prod_{k \geq 0} \prod_{\textbf{s}}\Hom(G(s_0)^{j+k}, X^{j+n})$ for the degree $n$ object, and so on), composition with the projections yields the same morphism. The notation for the projections is the obvious one, \textit{i.e.}, we denote by $p_j$, $p_k$ and $p_{\textbf{s}}$ the obvious morphisms from
		\[
			\prod_{j \in \ZZ} \prod_{k \geq 0} \prod_{\textbf{s}}\Hom(G(s_0)^{j+k}, X^{j+n}) \cong \prod_{k \geq 0} \prod_{\textbf{s}} \prod_{j \in \ZZ} \Hom(G(s_0)^{j+k}, X^{j+n})
		\]
		in the corresponding factor. Explicitly, the codomain of the triple composite of the projections (in any order)  $p_j$, $p_k$ and $p_{\textbf{s}}$ will be
		\[
			\Hom(G(s_0)^{j+k}, X^{j+n}),
		\] 
		where $s_0$ is the first object in the chain of morphisms $\textbf{s}$.
		
		In the case of the first differential, if we denote by $d_{\text{hocolim}}^n$ the $n$-th differential of the homotopy colimit of $F$, we have 
		\begin{equation*}
			\begin{aligned}
				&p_j d_{\Hom^\bullet}^n = \Hom\left(d_{\text{hocolim}}^j, X^{j+n+1}\right) p_{j+1} + (-1)^{n+1} \Hom\left(\!\!\!\hocolim{}G^j, d_X^{j+n}\right) p_j. \\
			\end{aligned}	
		\end{equation*}
		
		By the isomorphism
		\[
			\Hom\left(\coprod_{k \geq 0}\coprod_{\textbf{s}}G(s_0)^{j+k}, X^{j+n}\right) \cong \prod_{k \geq 0}\prod_{\textbf{s}}\Hom\left(G(s_0)^{j+k}, X^{j+n}\right)
		\]
		 and the description of the differential of the homotopy colimit (see \cite[\S 2, p. 231]{AJS}) we can rewrite the equality above, after postcomposing with the two new projections we get as a consequence of the isomorphism, as 
		\begin{equation}\label{first-differential}
			\begin{aligned}
				p_{\textbf{s}} p_k p_j d_{\Hom^\bullet}^n =& \Hom\left(G(s_0 \to s_1)^{j+k}, X^{j+n+1}\right) p_{\widehat{\textbf{s}}_0}p_{k-1} p_{j+1} \\
				&+ \sum_{i=1}^{k} (-1)^i \Hom\left(\id_{G(s_0)^{j+k}}, X^{j+n+1}\right) p_{\widehat{\textbf{s}}_i} p_{k-1} p_{j+1} \\
				&+ (-1)^{-k} \Hom\left(d_{G(s_0)}^{j+k}, X^{j+n+1}\right) p_{\textbf{s}} p_k p_{j+1} \\			
				&+(-1)^{n+1} \Hom\left(G(s_0)^{j+k}, d_X^{j+n}\right) p_{\textbf{s}} p_k p_{j}. \\
			\end{aligned}	
		\end{equation}
		
		One must note the following notational fact about the contributions coming from the term $\Hom(d_{\text{hocolim}}^j, X^{j+n+1}) p_{j+1}$ to avoid confusion, specifically the first two summands of (\ref{first-differential}). Observe that after fixing $j \in \ZZ$, $k \geq 0$ and $\textbf{s} \in \CN'_k(\Lambda)$ and projecting, the codomain of the composite is 
		\[
			\Hom\left(G(s_0)^{j+k}, X^{j+n+1}\right).
		\]
		The degree in $G(s_1)^{j+k}$ and $G(s_0)^{j+k}$ in the first two summands of in (\ref{first-differential}) is, in fact, $(j+1)+(k-1)$, but for clarity we have chosen not to write it, since this is already reflected in the projections.
		
		As for the second differential, that is $d_{\text{holim}}^n$, note that it is the differential of a homotopy limit, as we have described at the beginning of the section. Thus, we can write it as
		\[
			\prod_{k \geq 0} \left(\prod_{\textbf{s}} \Hom^{n-k}\left(G(s_0), X\right)\right) \xrightarrow{\sum_{k} d_h^{k n-k} + (-1)^k d_v^{kn-k}} \prod_{k \geq 0} \left(\prod_{\textbf{s}} \Hom^{n-k+1}(G(s_0), X)\right).
		\]
		Recall that 
		\[
		p_k d_{\text{holim}}^n =  d_h^{k-1 n-k+1} p_{k-1} + (-1)^{k} d_v^{k n-k} p_k.
		\]
		Next, write, for any $n \in \ZZ$,
		\[
		\prod_{k \geq 0} B^{\prod}\left(\Hom^\bullet(G(-), X)\right)^{k n-k} \cong \prod_{k \geq 0} \prod_{\textbf{s}} \prod_{j \in \ZZ} \Hom\left(G(s_0)^{j+k}, X^{j+n}\right).
		\]
		This is the description we will use at the level of objects to describe the differential. It is, up to commuting products, the same description for objects we used for the first differential. Therefore, we will show that up to commutation of the compositions with the projections, this differential and the first one agree. 
		
		Again, by the knowledge of the description of both horizontal and vertical differentials, we can describe both and get the full expression from the equality above. So, after applying the isomorphism, we observe that the horizontal contribution gives
		\begin{equation}\label{horizontal-contribution}
			\begin{aligned}
				p_j  p_{\textbf{s}} d_h^{k-1 n-k+1} p_{k-1} =& \Hom\left(G(s_0 \to s_1)^{j+k}, X^{j+n+1}\right) p_{j+1} p_{\widehat{\textbf{s}}_0} p_{k-1} \\
				&+ \sum_{i=1}^{k} (-1)^i \id_{\Hom(G(s_0)^{j+k}, X^{j+n+1})} p_{j+1} p_{\widehat{\textbf{s}}_i} p_{k-1}, \\
			\end{aligned}
		\end{equation}
		whereas the vertical one yields 
		\begin{equation}\label{vertical-contribution}
			\begin{aligned}
				p_j  p_{\textbf{s}} (-1)^k d_v^{k n-k} p_{k} =& p_j (-1)^k d_{\Hom^\bullet(G(s_0),X)}^{n-k} p_{\textbf{s}} p_{k} \\
				=& (-1)^k (-1)^{n-k+1} \Hom\left(G(s_0)^{j+k},d_X^{j+n}\right) p_j p_{\textbf{s}} p_{k} \\
				&+ (-1)^k  \Hom\left(d_{G(s_0)}^{j+k}, X^{j+n+1}\right) p_{j+1} p_{\textbf{s}} p_{k}. \\
			\end{aligned}
		\end{equation}
		Of course, $\Hom\left(\id_{G(s_0)^{j+k}}, X^{j+n+1}\right) = \id_{\Hom(G(s_0)^{j+k}, X^{j+n+1})}$, so combining (\ref{horizontal-contribution}) and (\ref{vertical-contribution}) we obtain the same result as we did in the first differential in (\ref{first-differential}). Therefore both complexes in the statement are isomorphic in $\CCC(\SA)$ to the same complex, and so they are isomorphic themselves.
	\end{proof}
	
	\begin{cor}\label{cor:derived-limits-and-colimits}
		Suppose $R\md$ is the category of left $R$-modules for a ring $R$, and let $\Lambda$ be a filtered poset of cardinality \,$\#\Lambda < \aleph_\omega$. For any direct system $G \colon \Lambda \to \CCC(R)$ and $X \in \CCC(R)$, we have the following isomorphism in $\D(R)$
		\[
			\rhom^\bullet(\LL\varinjlim G, X) \cong \R\varprojlim \rhom^\bullet(G(-),X),
		\]
		or, equivalently,
		\[
		\rhom^\bullet(\!\!\!\hocolim{}\!G, X) \cong \holim{} \!\!\!\rhom^\bullet(G(-),X).
		\]
	\end{cor}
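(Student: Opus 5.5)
The plan is to reduce the statement to Theorem \ref{hom-and-hocolim-holim} by choosing suitable resolutions, and then to translate both sides using Theorem \ref{thm:in-rmod-holim-is-rlim} and its colimit analogue from \cite{AJS}.

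\textbf{Step 1: the right-hand side.} Replace $X$ by a $K$-injective resolution $X \to I$ in $\CCC(R)$. Then $\rhom^\bullet(M,X) \cong \Hom^\bullet(M,I)$ for every complex $M$. In particular, the inverse system $s \mapsto \rhom^\bullet(G(s),X)$ is represented in $\cat(\Lambda^\op,\CCC(R))$ by the honest system $s \mapsto \Hom^\bullet(G(s),I)$. By Theorem \ref{thm:in-rmod-holim-is-rlim}, which requires $\#\Lambda<\aleph_\omega$, we get
\[
\R\varprojlim \rhom^\bullet(G(-),X) \cong \holim{}\Hom^\bullet(G(-),I).
\]
Theorem \ref{hom-and-hocolim-holim} applies since $R\md$ satisfies $\mathsf{AB3}$, and it identifies the right side with $\Hom^\bullet(\hocolim{}G, I)$, which is $\rhom^\bullet(\hocolim{}G,X)$.

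\textbf{Step 2: the left-hand side.} I need $\hocolim{}G \cong \LL\varinjlim G$ in $\D(R)$. Here I would use that direct limits of modules are exact, so by \cite[Theorem 2.2]{AJS} the canonical map $\hocolim{}G \to \varinjlim G$ is a quasi-isomorphism for every $G$. To see that this computes $\LL\varinjlim$, note that $\varinjlim$ is exact on $\cat(\Lambda,R\md)$, so it preserves quasi-isomorphisms and $\LL\varinjlim = \varinjlim$. Hence $\rhom^\bullet(\LL\varinjlim G,X)\cong \rhom^\bullet(\hocolim{}G,X)$. This shows the two displayed formulas are equivalent and completes the chain of isomorphisms.

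\textbf{Main obstacle: naturality and well-definedness.} The key point is the compatibility that the system $\Hom^\bullet(G(-),I)$ really represents $\rhom^\bullet(G(-),X)$ as an object of $\D(\cat(\Lambda^\op,R\md))$, independently of choices. Since $I$ is $K$-injective, a quasi-isomorphism $G\to G'$ of direct systems gives degreewise quasi-isomorphisms $\Hom^\bullet(G'(s),I)\to\Hom^\bullet(G(s),I)$. A morphism of systems is a quasi-isomorphism in $\D$ of the functor category exactly when it is one pointwise, so this suffices. Independence of the choice of $I$ is similar. The naturality of the isomorphism in Theorem \ref{hom-and-hocolim-holim} is evident from its construction out of canonical product/coproduct isomorphisms. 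With these checks the isomorphisms compose to the claimed natural isomorphism in $\D(R)$.
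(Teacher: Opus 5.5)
Your proposal is correct and follows essentially the same route as the paper: you resolve $X$ by a $K$-injective complex $I$, use exactness of filtered colimits and \cite[Theorem 2.2]{AJS} to replace $\LL\varinjlim G$ by the homotopy colimit of $G$, and then chain Theorem~\ref{hom-and-hocolim-holim} with Theorem~\ref{thm:in-rmod-holim-is-rlim}; your well-definedness check plays the role of the Remark following the corollary. One small point, which the paper also glosses over: $\Hom^\bullet(G(s),I)$ is in general only a complex of abelian groups, so Theorem~\ref{thm:in-rmod-holim-is-rlim} is really being applied with $\ZZ$ in place of $R$ (and the isomorphism lives in $\D(\ZZ)$), which that theorem permits since its hypotheses constrain only $\Lambda$.
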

	\begin{proof}
		Because $R\md$ is a Grothendieck category, filtered colimits are exact, so the functor $\LL\varinjlim \colon \D(\cat(\Lambda, R\md)) \to \D(R)$ is actually just $\varinjlim$.
		
		On the right side term, we want to compute $\rhom^\bullet(G(-),X)$, but this can be done by taking a $K$-injective resolution on the second argument (see the Remark below). Therefore, we take a $K$-injective resolution $X \to I_X$ and obtain 
		\begin{equation*}
			\begin{aligned}
				\rhom^\bullet(\LL\varinjlim G, X) &= \rhom^\bullet(\varinjlim G, X) = \Hom^\bullet(\varinjlim G, I_X) \\
				&\cong \Hom^\bullet(\!\!\!\hocolim{}\!G, I_X) 
				\cong \!\holim{} \!\!\!\Hom^\bullet(G(-), I_X) \\
				&\cong \R\varprojlim \Hom^\bullet(G(-), I_X) \cong \R\varprojlim \rhom^\bullet(G(-),X).
			\end{aligned}
		\end{equation*}
		The isomorphism from the first line to the second is just \cite[Theorem 2.2.]{AJS}, while the one in the second line is Theorem \ref{hom-and-hocolim-holim}. Finally, the one from the second to the third line is Theorem \ref{thm:in-rmod-holim-is-rlim}, and the final one is the discussion above.
	\end{proof}

	\begin{rem}
		For the more careful reader, a brief comment can be made about the computation of $\rhom^\bullet(G(-),X)$ above. On the one hand, we can see this as right-deriving the functor
		\[
		\Hom^\bullet(-,X) \colon \cat(\Lambda,\CCC(R)) \to \cat(\Lambda^\op, \CCC(R)),
		\] 
		for whose definition one only must note that if $s \leq t$ in $\Lambda$, which implies we have a map $G(s) \to G(t)$ in $\CCC(R)$, we obviously obtain a map
		\[
		\Hom^\bullet(G(-),X)(t) = \Hom^\bullet(G(t),X) \to \Hom^\bullet(G(-),X)(s) = \Hom^\bullet(G(s),X).
		\]
		Alternatively, we can see it as deriving the functor
		\[
		\Hom^\bullet(G,-) \colon \CCC(R) \to \cat(\Lambda^\op, \CCC(R)),
		\]
		which is defined analogously. 
		
		So, in order to compute $\rhom^\bullet(G(-),X)$, we could take a $K$-projective resolution $P_G \to G$ in $\CCC(\cat(\Lambda, R\md)) \cong \cat(\Lambda, \CCC(R))$ (which exists because the category $\cat(\Lambda, R\md)$ has enough projectives and satisfies $\mathsf{AB4}$) and compute $\Hom^\bullet(P_G,X)$, or take a $K$-injective resolution $X \to I_X$ and compute $\Hom^\bullet(G,I_X)$. The fact that both approaches yield quasi-isomorphic complexes (of diagrams) follows from the fact that quasi-isomorphisms are checked pointwise.
	\end{rem}

\section{Homotopy limits and colocalizing subcategories}\label{Section:homotopy-limits-and-colocalizing-subcategories}
	In this section we present the analogous to the main result in \cite[\S 3]{AJS}, with similar techniques, albeit taking into account the particularities of working with limits. We begin recalling the basic definition needed to state the main result:
	
	Let $\ST$ be a triangulated category with products. A triangulated subcategory $\SC$ of $\ST$ is said to be \textit{colocalizing} if it is closed under products. The main result, Theorem \ref{thm:holim-colocalizing}, says that colocalizing subcategories are stable under homotopy limits. The idea is to see the homotopy limit as a countable limit with pleasant properties. 

	We say that a complex $M \in \CCC(A)$ possesses a \textit{exhaustive cofiltration} if there exist a family of complexes $\{P_n\}_{n \in \NN}$ such that for each $n \in \NN$ there is an epimorphism $P_{n+1} \epi P_n$ and so that $M$ is the limit in $\CCC(\SA)$ of the diagram determined by the family, that is,
	\[
		\varprojlim_{n \in \NN} P_n = M.
	\]

	\begin{lem}\label{lemma:totalization-products-filtration}
		Let $i_0 \in \ZZ$ be a fixed integer, $\SA$ be an abelian category satisfying $\mathsf{AB3^*}$ and $\{X,d_h,d_v\}$ a double complex such that $X^{i\bullet} = 0$ for all $i < i_0$. Then, $\Tot^{\prod}(X)$ possesses a semi-split, exhaustive cofiltration of complexes  $\{P_n\}_{n \in \NN}$ such that $P_n/X^{n\bullet} [-n] \cong P_{n-1}$ for each $n \in \NN$. 
	\end{lem}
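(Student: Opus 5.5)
The plan is to take as $P_n$ the totalization by products of the \emph{stupid truncation} of $X$ in the horizontal direction. Reindex so that the horizontal degrees start at $i_0$. For each $n \in \NN$, let $X_{\leq n}$ be the double complex with $X_{\leq n}^{i\bullet} = X^{i\bullet}$ if $i \leq i_0 + n$ and $X_{\leq n}^{i\bullet} = 0$ otherwise, keeping the same horizontal and vertical differentials (the horizontal one out of the last column set to zero). This is a quotient double complex of $X$: the columns with $i > i_0+n$ form a sub double complex, since $d_h$ raises the horizontal degree. Set $P_n \coloneqq \Tot^{\prod}(X_{\leq n})$. For $i_0=0$ this matches the normalization $P_n/X^{n\bullet}[-n] \cong P_{n-1}$ in the statement; in general one shifts the index by $i_0$.

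\textbf{Step 1 (the surjections and their kernels).} The obvious quotient maps $X_{\leq n} \to X_{\leq n-1}$ are morphisms of double complexes, since only the last column is killed and differentials go rightward and upward. Totalizing gives $P_n \to P_{n-1}$. In each degree $m$ this map is the projection
\[
\prod_{i_0\leq i\leq i_0+n} X^{i\,m-i} \lto \prod_{i_0\leq i\leq i_0+n-1} X^{i\,m-i},
\]
a split epimorphism, with kernel $X^{i_0+n\,m-i_0-n}$. Using (\ref{differential-totalization-products}), the kernel with the restricted differential is the single column $X^{i_0+n\,\bullet}$ placed in horizontal degree $i_0+n$. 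Its vertical differential carries the sign $(-1)^{i_0+n}$, so as a complex it is $X^{i_0+n\,\bullet}[-(i_0+n)]$, up to the standard sign convention for shifts; an isomorphism of complexes absorbs any residual sign. This gives the semi-split short exact sequences
\[
0 \lto X^{n\bullet}[-n] \lto P_n \lto P_{n-1} \lto 0,
\]
and hence $P_n/X^{n\bullet}[-n]\cong P_{n-1}$.

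\textbf{Step 2 (exhaustiveness).} I will show $\varprojlim_n P_n = \Tot^{\prod}(X)$ in $\CCC(\SA)$. Limits in $\CCC(\SA)$ are computed degreewise. In degree $m$, the system consists of the finite products $\prod_{i_0\le i\le i_0+n} X^{i\,m-i}$ with the projections as transition maps, and its limit is $\prod_{i\ge i_0} X^{i\,m-i} = \Tot^{\prod}(X)^m$. This is the universal property of the product, the same argument as in Lemma \ref{lemma:holim-as-lim}: a compatible family of maps into the finite subproducts is the same thing as a map into the full product. The differentials agree because the projections $\Tot^{\prod}(X)\to P_n$ are themselves totalizations of the double complex quotients $X\to X_{\leq n}$, so they are chain maps.

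\textbf{Main obstacle.} Essentially nothing here is deep. The only delicate point is bookkeeping: identifying the kernel column with the shifted complex $X^{n\bullet}[-n]$ including the sign $(-1)^n$ on the vertical differential, and checking that this really yields an isomorphism of complexes compatible with the paper's convention (\ref{differential-totalization}). I would handle this by writing the kernel differential explicitly via (\ref{differential-totalization-products}) and comparing it with the shift convention. Where the sign conventions differ, I would use the automorphism multiplying degree $m$ by a suitable sign $\pm 1$.
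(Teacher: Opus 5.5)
Your proof is correct and is essentially the paper's argument. The paper builds $P_n$ inductively as the fiber of $d_h^{n-1\,\bullet}[-n+1]\circ p_{n-1}\colon P_{n-1}\to X^{n\bullet}[-n+1]$, but notes at the end that this $P_n$ is exactly the totalization by products of $X$ between horizontal degrees $0$ and $n$, which is your $\Tot^{\prod}(X_{\leq n})$. Defining it directly as you do makes the chain-map checks for the transition maps and for the projections $\Tot^{\prod}(X)\to P_n$ automatic by functoriality of $\Tot^{\prod}$, and your semi-split sequences rotate to the triangles (\ref{dist-triangle-filtration}) that Theorem \ref{thm:holim-colocalizing} later uses.
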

	\begin{proof}
		By taking desuspensions we may assume that $i_0=0$. Let $P_0 \coloneqq X^{0\bullet}$; the rest of the complexes in the family $\{P_n\}_{n \in \NN}$ will be constructed inductively. In fact, they will be constructed so that $P_n$ is the fiber in a distinguished triangle
		\begin{equation}\label{dist-triangle-filtration}
			P_n \xrightarrow{g_{n-1}} P_{n-1} \xrightarrow{h_{n-1}} 
		X^{n \bullet}[-n+1] \xrightarrow{\nu_n} P_n[1].
		\end{equation}
		Therefore, the complex $P_n$ will be, at the graded level, 
		\[
			P_n \cong P_{n-1} \oplus X^{n \bullet}[-n] \cong  P_{n-1} \times X^{n \bullet}[-n],
		\] 
		while $g_{n-1}$ is then just the projection onto $P_{n-1}$ and $\nu_n$ is just the inclusion.
		
		In order to construct $F_1$ we define $h_0 \colon F_0 = X^{0 \bullet} \to X^{1\bullet}$ to be $d_h^{0 \bullet}$, which is a morphism of complexes because of our convention regarding double complexes (see Section \ref{Section:Conventions}). For an arbitrary $n \geq 1$, we set
		$h_n \colon P_n \to X^{n+1 \bullet}[-n]$ to be the composite
		\[
			\begin{tikzpicture}
				\matrix (m) [matrix of math nodes,
				row sep=1.6em, column sep=5em,
				text height=2ex,
				text depth=0.25ex]{
					P_n = P_{n-1} \times X^{n\bullet}[-n] &  \\
				X^{n \bullet} [-n] &  	X^{n+1 \bullet} [-n],\\
				};
				\path[->, font=\scriptsize]
				(m-1-1) edge node[auto] {$h_n$} (m-2-2)
				(m-1-1) edge node[left] {$p_n$} (m-2-1)
				(m-2-1) edge node[below] {$d_h^{n \bullet}[-n]$} (m-2-2);
			\end{tikzpicture}
		\] with $p_n$ being the canonical projection. The composed morphism $h_n$ is a moprhism of complexes because both $p_n$ and $d_h^{n \bullet}[-n]$ are. 
		
		It is then clear that the morphisms $P_n \to P_{n-1}$ are semi-split morphisms of complexes, for in each degree $j \in \ZZ$
		\[
			P_n^j = \prod_{k=0}^{n} X^{k \,\, j-k} \lto P_{n-1}^j = \prod_{k=0}^{n-1} X^{k \,\, j-k}
		\]is the projection into the smaller product, so it is in fact a retraction. In particular, the arrows $P_n \to P_{n-1}$ are epimorphisms in $\CCC(\SA)$. The fact that $P_n/X^{n\bullet} [-n] \cong P_{n-1}$ follows from the semi-split short exact sequence 
		\[
		 0 \lto X^{n \bullet}[-n] \lto P_n \lto P_{n-1} \lto 0
		\]
		associated to the distinguished triangle (\ref{dist-triangle-filtration})
		and the description of the $P_n$ given above.
		
		Finally, we show that it is exhaustive, meaning that
		\[
			\varprojlim_{n \geq 0} P_n \cong \Tot^{\prod}(X)
		\]
		in $\CCC(\SA)$. Firstly, $\Tot^{\prod}(X)$ is a \textit{cone over the diagram} formed by the $P_n$, since we can define projections $\pi_n \colon \Tot^{\prod}(X) \to P_n$ for every $n \geq 0$ naturally as
		\[
			\pi_n^j \colon \Tot^{\prod}(X)^{j} = \prod_{k \geq 0} X^{k \,\,j-k} \lto \prod_{k=0}^{n} X^{k \,\,j-k} = P_n^j
		\]
		in each degree $j \in \ZZ$. Commutativity with the morphisms $g_{n-1} \colon P_n \to P_{n-1}$ is obvious. Inductively, we see that the morphisms $\pi_n$ are indeed morphisms of complexes for every $n \geq 0$. For $n=0$ it is a quite straightforward check. Suppose then $n > 0$ and $\Tot^{\prod}(X) \to P_{n-1}$ is a morphism of complexes. Denote by $d_{n}^j \colon P_{n}^j \to P_{n}^{j+1}$ the differential of the complex $P_n$ for any $n$. By writing, for any $j \in \ZZ$, the differential $d_n^j$ in terms of $d_{n-1}^j$ (because recall $P_n$ is the fiber of a morphism $P_{n-1} \to X^{n \bullet}[-n+1]$) it is again a straightforward computation to check that the equality 
		\[
			\pi_{n}^{j+1} d_{\Tot^{\prod}(X)}^j = d_{n}^j \pi_n^j
		\]
		holds. Therefore, $\Tot^{\prod}(X)$ is a cone over the diagram, and the construction shows it is quite clearly universal. After all, each object $P_n$, as we have defined them, is (as a complex, not just as a graded object) the totalization by products (or coproducts, since only finitely many objects are involved) of $X$ between horizontal degrees $0$ (because of the strict bounds of $X$) and $n$.
	\end{proof}
	
	\begin{rem}
		
		From the previous result we learn that the inverse system $\NN^\op \to \CCC(\SA)$ given by $n \mapsto P_n$ obtained in the proof satisfies the epimorphic Mittag-Leffler condition degree-wise. Suppose $H \colon \NN^\op \to \CCC(\SA)$ is any inverse system satisfying the epimorphic Mittag-Leffler condition degree-wise. If $\SA$ further satisfies $\mathsf{AB4^*}$, consider the two-term complex (of complexes) analogous to the one presented in \ref{countable-higher-derived-limits-abelian-cat}, that is,
		\[
			\prod_{i=0}^{\infty} H(i) \xrightarrow{1 - \text{shift}} \prod_{i=0}^{\infty} H(i),
		\]
		We remind the reader that the $\text{shift}$ map is the composite 
		\[
			\prod_{i=0}^{\infty} H(i) \xrightarrow{\pi} \prod_{i=1}^{\infty} H(i) \xrightarrow{\prod_{i=1}^{\infty} [H(i) \to H(i-1)]} \prod_{i=0}^{\infty} H(i),
		\]
		with $\pi$ being the canonical projection.
		
		Since every limit and colimit in the category $\CCC(\SA)$ is computed degree-wise,  denote, for any $m \in \ZZ$, the inverse system $H^m \colon \NN^\op \to \SA$ given by $n \mapsto H^m(n) = H(n)^m$. As mentioned, kernels and cokernels are computed degree-wise, and because in each degree $m \in \ZZ$ the map $1- \text{shift}$ is again $1 - \text{shift}$, we obtain that $\ker(1 - \text{shift})$ and $\cok(1 -  \text{shift})$ are the complexes in $\CCC(\SA)$ whose objects in degree $m \in \ZZ$ are given by $\varprojlim H^m$ and $\sideset{}{^{1}}\varprojlim H^m$, respectively. In each degree $m \in \ZZ$, we are just working with (higher) limits of inverse systems in $\SA$; namely, the diagrams $H^m \colon \NN^\op \to \SA$. Finally, assume that $\SA$ has a generator. In these conditions, we learn from Theorem \ref{thm:vanishing-countable-mittag-leffler-higher-limits} that for each $m \in \ZZ$ we have $\sideset{}{^{1}}\varprojlim H^m = 0$, so the complex $\cok(1 -  \text{shift})$ is zero. Of course, $\ker(1 - \text{shift}) = \varprojlim H$. This yields the short exact sequence	of complexes	
		\begin{equation}\label{short-exact-seq-milnor}
			0 \lto \varprojlim H \lto \prod_{i=0}^{\infty} H(i) \xrightarrow{1 - \text{shift}} \prod_{i=0}^{\infty} H(i) \lto 0,
		\end{equation}
		due to Milnor.
	\end{rem}
	
	\begin{lem}\label{lem:colocalzing-mittag-leffler-countable-system}
		Let $\SA$ be an abelian category with a generator and satisfying $\mathsf{AB3}$ and $\mathsf{AB4^*}$, and $\SC$ a colocalizing subcategory of $\D(\SA)$. Let $H \colon \NN^\op \to \CCC(\SA)$ be an inverse system satisfying the epimorphic Mittag-Leffler condition degree-wise. If $H(n) \in \SC$ for every $n \in \NN$, then $\varprojlim H \in \SC$.
	\end{lem}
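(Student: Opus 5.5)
The plan is to use the Milnor short exact sequence (\ref{short-exact-seq-milnor}) from the preceding Remark. Under the hypotheses ($\mathsf{AB3}$, $\mathsf{AB4^*}$, a generator, and $H$ degree-wise epimorphic Mittag-Leffler), that Remark provides an exact sequence of complexes
\[
0 \lto \varprojlim H \lto \prod_{i=0}^{\infty} H(i) \xrightarrow{1 - \text{shift}} \prod_{i=0}^{\infty} H(i) \lto 0.
\]
Exactness on the right is exactly the degree-wise vanishing of $\sideset{}{^{1}}\varprojlim H^m$ given by Theorem \ref{thm:vanishing-countable-mittag-leffler-higher-limits}.

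A short exact sequence in $\CCC(\SA)$ yields a distinguished triangle in $\D(\SA)$:
\[
\varprojlim H \lto \prod_{i} H(i) \xrightarrow{1-\text{shift}} \prod_i H(i) \overset{+}{\lto}.
\]
Once both product terms lie in $\SC$, the result follows, because $\SC$ is a triangulated subcategory and so is closed under fibres (cocones) of morphisms between its objects. This gives $\varprojlim H \in \SC$.

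The key point, and the step I expect to need care, is identifying the degree-wise product $\prod_i H(i)$ taken in $\CCC(\SA)$ with the categorical product of the $H(i)$ in $\D(\SA)$. Only the latter is guaranteed to lie in $\SC$ by the colocalizing hypothesis. Under $\mathsf{AB4^*}$ products are exact, so the degree-wise product preserves quasi-isomorphisms and commutes with cohomology. However, this alone does not make it the product in $\D(\SA)$: one needs $\Hom_{\D}(Y,\prod_i H(i)) \cong \prod_i \Hom_{\D}(Y,H(i))$.

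I would establish this by the standard argument for $\mathsf{AB4^*}$ categories: products in $\D(\SA)$ exist and are computed by the product of complexes. The intended route uses the derived functor of $\prod \colon \CCC(\SA)^{\NN} \to \CCC(\SA)$, which is right adjoint to the diagonal. Exactness of $\prod$ under $\mathsf{AB4^*}$ means no resolution is needed, so $\R\prod = \prod$. This is the dual of the fact that coproducts in $\D$ of an $\mathsf{AB4}$ category are computed degree-wise, and I would cite or dualize that argument; see Neeman's treatment or \cite{BN}.

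Alternatively, if existence of products in $\D(\SA)$ is itself an issue, I would argue directly. One shows the diagonal functor $\D(\SA) \to \D(\SA)^{\NN}$ has the degree-wise product as right adjoint. This uses that the product functor on complexes preserves acyclics (by $\mathsf{AB4^*}$) and is right adjoint to the diagonal already at the level of $\K$, so it descends by the usual localization argument for adjoints preserving quasi-isomorphisms.

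With that identification, both middle terms are products in $\D(\SA)$ of objects of $\SC$, hence lie in $\SC$, and the triangle concludes the proof.
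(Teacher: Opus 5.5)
Your proposal is correct and follows the paper's proof: the Milnor short exact sequence from the preceding Remark gives a distinguished triangle whose two product terms lie in $\SC$, so its fibre $\varprojlim H$ lies in $\SC$ as well. The only difference is that the paper simply asserts that under $\mathsf{AB4^*}$ products in $\D(\SA)$ exist and agree with the degree-wise products in $\CCC(\SA)$, whereas you also justify this (acyclic complexes are closed under products, so the product/diagonal adjunction on $\K(\SA)$ descends to $\D(\SA)$), which is a valid argument.
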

	\begin{proof}
		First, note that $\SA$ satisfying $\mathsf{AB4^*}$ implies that $\D(\SA)$ has products and they coincide with those of $\K(\SA)$ and $\CCC(\SA)$. By the previous remark, we obtain the short exact sequence
		\[
			0 \lto \varprojlim H \lto \prod_{i=0}^{\infty} H(i) \xrightarrow{1 - \text{shift}} \prod_{i=0}^{\infty} H(i) \lto 0,
		\]
		whence a distinguished triangle
		\[
			\varprojlim H \lto \prod_{i=0}^{\infty} H(i) \xrightarrow{1 - \text{shift}} \prod_{i=0}^{\infty} H(i) \overset{+}{\lto}.
		\]
		Since both the middle and right terms of the triangle belong to $\SC$ because $\SC$ is colocalizing, so does the left term.
		
	\end{proof}
	
	\begin{thm}\label{thm:holim-colocalizing}
		Let $\SA$ be an abelian category with a generator and satisfying $\mathsf{AB3}$ and $\mathsf{AB4^*}$, $\SC$ a colocalizing subcategory of \,$\D(\SA)$ and $\Lambda$ a filtered poset. For any inverse system $F \colon \Lambda^\op \to \CCC(\SA)$ such that $F(s) \in \SC$ for all $s \in \Lambda$, the homotopy limit
		\[
		\holim{} F
		\]
		also belongs to $\SC$.
	\end{thm}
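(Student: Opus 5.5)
The plan is to view $\holim{} F = \Tot^{\prod}(B^{\prod}(F))$ as the limit of a countable epimorphic Mittag-Leffler tower and then invoke Lemma \ref{lem:colocalzing-mittag-leffler-countable-system}. The double complex $X \coloneqq B^{\prod}(F)$ satisfies $X^{k\bullet} = 0$ for $k<0$. So Lemma \ref{lemma:totalization-products-filtration}, applied with $i_0 = 0$, gives a semi-split exhaustive cofiltration $\{P_n\}_{n\in\NN}$ with $\varprojlim_n P_n \cong \holim{} F$. The transition maps $P_n \to P_{n-1}$ are degree-wise split epimorphisms, so the tower $n \mapsto P_n$ satisfies the epimorphic Mittag-Leffler condition degree-wise. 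Hence it suffices to show that $P_n \in \SC$ for every $n$.

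This will be done by induction on $n$, using the semi-split short exact sequences
\[
0 \lto X^{n\bullet}[-n] \lto P_n \lto P_{n-1} \lto 0
\]
from Lemma \ref{lemma:totalization-products-filtration}. Being semi-split, each sequence yields a distinguished triangle $X^{n\bullet}[-n] \to P_n \to P_{n-1} \overset{+}{\to}$ in $\K(\SA)$, and hence in $\D(\SA)$. Since $\SC$ is triangulated, and in particular closed under shifts and extensions, it is enough to check that each column $X^{n\bullet}$ lies in $\SC$. The base case is $P_0 = X^{0\bullet}$.

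For the columns, note that as a complex (with vertical differential) $X^{k\bullet} = \prod_{\textbf{s} \in \CN'_k(\Lambda)} F(s_0)$, where the product is taken in $\CCC(\SA)$. Because $\SA$ satisfies $\mathsf{AB4^*}$, products in $\CCC(\SA)$ are exact and compute the products in $\D(\SA)$, as noted in the proof of Lemma \ref{lem:colocalzing-mittag-leffler-countable-system}. Each factor $F(s_0)$ lies in $\SC$ by hypothesis, and $\SC$ is closed under products, so $X^{k\bullet}\in\SC$. Then Lemma \ref{lem:colocalzing-mittag-leffler-countable-system}, whose hypotheses (generator, $\mathsf{AB3}$, $\mathsf{AB4^*}$) are exactly those of the theorem, gives $\holim{} F \cong \varprojlim_n P_n \in \SC$.

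The main point needing care is the identification of the columns as products in $\D(\SA)$. The vertical differential on $X^{k\bullet}$ is the product of the differentials of the $F(s_0)$, so this is formal given $\mathsf{AB4^*}$. A second check is that the triangle from each semi-split sequence indeed lands in $\D(\SA)$ with the correct shift, so that the extension argument applies. The filtered hypothesis on $\Lambda$ plays no real role here. Beyond these checks, the argument reduces entirely to the earlier lemmas, and the Roos–Gabber vanishing (Theorem \ref{thm:vanishing-countable-mittag-leffler-higher-limits}) is already absorbed into Lemma \ref{lem:colocalzing-mittag-leffler-countable-system}.
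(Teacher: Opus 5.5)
Your proposal is correct and matches the paper's proof essentially step by step. The columns of $B^{\prod}(F)$ lie in $\SC$ as products of the $F(s)$; the cofiltration of Lemma~\ref{lemma:totalization-products-filtration} and its triangles give $P_n \in \SC$ by induction; and Lemma~\ref{lem:colocalzing-mittag-leffler-countable-system}, applied to the degree-wise split-epimorphic tower $n \mapsto P_n$, concludes (your triangle from the semi-split sequence is just a rotation of the paper's triangle (\ref{dist-triangle-filtration})).
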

	\begin{proof}
		By definition, we have 
		\[
			\holim{} F = \Tot^{\prod}(B^{\prod}(F)).
		\]
		Again, by the definition of $B^{\prod}(F)$, we obtain that each column $B^{\prod}(F)^{i \bullet}$ belongs to $\SC$ because they are defined as products of the complexes $F(s)$, with $s \in \Lambda$, and $\SC$ is colocalizing. Further, Lemma \ref{lemma:totalization-products-filtration} tells us that 
		\[
			\Tot^{\prod}(B^{\prod}(F)) = \varprojlim_{n \in \NN} P_n.
		\]
		Inductively, we learn that each $P_n$ belongs to $\SC$. This is true for $P_0 = B^{\prod}(F)^{0 \bullet}$, and the triangles (\ref{dist-triangle-filtration}) yield $P_n \in \SC$ after assuming $P_{n-1} \in \SC$. The Remark after Lemma \ref{lemma:totalization-products-filtration} also informs us that the inverse system defined by the $P_n$ satisfies the epimorphic Mittag-Leffler condition degree-wise; therefore, the application of Lemma \ref{lem:colocalzing-mittag-leffler-countable-system} to $P \colon \NN^\op \to \CCC(\SA)$ defined by $n \mapsto P_n$ lets us conclude. 
	\end{proof}
	

\end{document}